\documentclass{amsart}
\pdfoutput=1

\usepackage[mathscr]{eucal}
\usepackage{amssymb}
\usepackage{stmaryrd}
\usepackage[usenames,dvipsnames]{xcolor}
\usepackage{xspace}
\usepackage{amsthm}
\usepackage{bbold}
\usepackage{enumerate}

\usepackage{amsmath}
\usepackage{mathdots}
\usepackage{graphicx}
\usepackage{float}

\numberwithin{equation}{section}

\usepackage[all]{xy}
\SelectTips{cm}{}
\newdir{ >}{{}*!/-10pt/\dir{>}}

\usepackage[colorlinks=true,linkcolor={Black},citecolor={Black},urlcolor={Black}]{hyperref}
\definecolor{azure(colorwheel)}{rgb}{0.0, 0.5, 1.0}
\definecolor{amber}{rgb}{1.0, 0.49, 0.0}

\usepackage[nameinlink]{cleveref}

\crefname{Thm}{Theorem}{Theorems}
\crefname{Thm*}{Theorem}{Theorems}
\crefname{Rem}{Remark}{Remarks}
\crefname{Prop}{Proposition}{Propositions}
\crefname{Cor}{Corollary}{Corollaries}
\crefname{Cons}{Construction}{Constructions}
\crefname{Exa}{Example}{Examples}
\crefname{Lem}{Lemma}{Lemmas}
\crefname{Rec}{Recollection}{Recollections}
\crefname{subsection}{Subsection}{Subsections}

\swapnumbers 
\newtheorem{Cor}[equation]{Corollary}
\newtheorem{Lem}[equation]{Lemma}
\newtheorem{Prop}[equation]{Proposition}
\newtheorem{Thm}[equation]{Theorem}
\newtheorem*{Thm*}{Theorem}

\theoremstyle{remark}
\newtheorem{Def}[equation]{Definition}

\newtheorem{Exa}[equation]{Example}

\newtheorem{Rem}[equation]{Remark}
\newtheorem{Rec}[equation]{Recollection}

\newcommand{\nc}{\newcommand}
\nc{\dmo}{\DeclareMathOperator}

\dmo{\con}{con}
\dmo{\cone}{cone}
\dmo{\Der}{D}
\dmo{\geom}{gm}
\dmo{\DAM}{DAM}
\dmo{\DATM}{DATM}
\dmo{\DM}{DM}
\dmo{\DPerm}{DPerm}
\dmo{\ev}{ev}
\dmo{\Hm}{H}
\dmo{\id}{id}
\dmo{\Id}{Id}
\dmo{\Img}{Im}
\dmo{\Infl}{Infl}
\dmo{\Komp}{K}
\dmo{\Ker}{Ker}
\dmo{\Mod}{Mod}
\dmo{\modname}{mod}
\dmo{\per}{per}
\dmo{\Per}{Per}
\dmo{\perm}{perm}
\dmo{\Perm}{Perm}
\dmo{\Perf}{Perf}
\dmo{\proj}{proj}
\dmo{\Proj}{Proj}
\dmo{\Qcoh}{Qcoh}
\dmo{\Res}{Res}
\dmo{\SH}{SH}
\dmo{\smallb}{b}
\dmo{\smallperf}{perf}
\dmo{\Spc}{Spc}
\dmo{\Spec}{Spec}
\dmo{\Spech}{\Spec^{h}}
\dmo{\stmod}{stmod}
\dmo{\StMod}{StMod}
\dmo{\stperm}{stperm}
\dmo{\StPerm}{StPerm}
\dmo{\subname}{Sub}
\dmo{\supp}{supp}
\dmo{\Supp}{Supp}
\dmo{\End}{End}
\dmo{\kosname}{kos}
\dmo{\Ind}{Ind}
\dmo{\Sub}{Sub}
\dmo{\incl}{incl}
\dmo{\Inj}{Inj}
\dmo{\Desc}{Desc}

\nc{\adj}{\dashv}
\nc{\aka}{{a.\,k.\,a.}\ }
\nc{\calU}{\mathcal{U}}
\nc{\cat}[1]{\mathscr{#1}}
\nc{\cC}{\cat{C}}
\nc{\cJ}{\cat{J}}
\nc{\cK}{\cat{K}}
\nc{\cL}{\cat{L}}
\nc{\cM}{\cat{M}}
\nc{\colim}{\mathop{\mathrm{colim}}}
\nc{\compl}{\complement}
\nc{\cO}{\cat{O}}
\nc{\cP}{\cat{P}}
\nc{\cQ}{\cat{Q}}
\nc{\cS}{\cat{S}}
\nc{\cT}{\cat{T}}
\nc{\cV}{\cat{V}}
\nc{\Db}{\Der_{\smallb}}
\nc{\Dperf}{\Der_{\smallperf}}
\nc{\eg}{{\sl e.g.}\@\xspace}
\nc{\gm}{\mathfrak{m}}
\nc{\gn}{\mathfrak{n}}
\nc{\gp}{\mathfrak{p}}
\nc{\gq}{\mathfrak{q}}
\nc{\ideal}[1]{\langle #1\rangle}
\nc{\ie}{{\sl i.e.}\@\xspace}
\nc{\ihom}{{\mathsf{hom}}} 
\nc{\into}{\mathop{\rightarrowtail}}
\nc{\inv}{^{-1}}
\nc{\isoto}{\overset{\sim}{\,\to\,}}
\nc{\Kb}{\Komp_{\smallb}}
\dmo{\Locname}{Loc}
\nc{\Loc}[1]{\Locname(#1)}
\nc{\Loctens}[1]{\Locname^{\otimes}(#1)}
\nc{\loccit}{{\sl loc.\ cit.}\xspace}
\nc{\Mid}{\,\big|\,}
\nc{\mmod}[1]{\modname(#1)}
\nc{\normal}{\triangleleft}
\nc{\normaleq}{\trianglelefteqslant}
\nc{\potimes}[1]{^{\otimes #1}}
\nc{\qcO}{\mathcal{QO}}
\nc{\FF}{\mathbb{F}}
\nc{\QQ}{\mathbb{Q}}
\nc{\RR}{\mathbb{R}}
\nc{\sbull}{{\scriptscriptstyle\bullet}}
\nc{\SET}[2]{\big\{\,#1\Mid#2\,\big\}}
\nc{\sminus}{\smallsetminus}
\nc{\SpcK}{\Spc(\cK)}
\nc{\SpcL}{\Spc(\cL)}
\nc{\SpcT}{\Spc(\cT^c)}
\nc{\SpcS}{\Spc(\cS^c)}
\nc{\SpSp}{\mathsf{Spec}}
\nc{\To}{\Rightarrow}
\nc{\too}{\mathop{\longrightarrow}\limits}
\nc{\unit}{\mathbb{1}}
\nc{\ee}{\mathbb{e}}
\nc{\ff}{\mathbb{f}}
\nc{\Weyl}[2]{{#1}/\!\!/{#2}}
\nc{\WGH}{\Weyl{G}{H}}
\nc{\WGHi}{\Weyl{G}{H_i}}
\nc{\WGK}{\Weyl{G}{K}}
\nc{\wX}{\overline{X}{}^{\calU}}
\nc{\xto}[1]{\xrightarrow{#1}}
\nc{\bbA}{\mathbb{A}}
\nc{\bbF}{\mathbb{F}}
\nc{\bbR}{\mathbb{R}}
\nc{\bbZ}{\mathbb{Z}}
\nc{\onto}{\mathop{\twoheadrightarrow}}
\nc{\eqperf}{equivariantly perfect}
\nc{\Keqperf}[1]{\cK_{\textrm{eq-perf}}(#1)}
\nc{\TU}{\cT|_{U}}
\nc{\SV}{\cS|_{V}}
\nc{\fUV}{f^*|_{U}^{V}}
\nc{\Kac}{\cK_{\textrm{ac}}}

\date{2026 April 16}

\author{Paul Balmer}
\address{Paul Balmer, UCLA Mathematics Department, Los Angeles, CA 90095, USA}
\email{balmer@math.ucla.edu}
\urladdr{https://www.math.ucla.edu/~balmer}

\author{Martin Gallauer}
\address{Martin Gallauer, Mathematics Institute, University of Warwick}
\email{martin.gallauer@warwick.ac.uk}
\urladdr{https://mgallauer.warwick.ac.uk}

\hypersetup{pdfauthor={Paul Balmer and Martin Gallauer},pdftitle={Permutation, stabilization and decomposition}}

\usepackage{xcolor}
\nc{\paul}[1]{{\color{Orchid}{#1}}}
\nc{\martin}[1]{{\color{brown}{#1}}}

\begin{document}


\title{Permutation, stabilization and decomposition}

\begin{abstract}
Informed by our understanding of the tt-geometry of permutation modules, we investigate the proper definition of the `stable permutation category' of a finite group.
Then we prove that this category decomposes over cyclic and generalized quaternion groups and only in those cases.
\end{abstract}

\subjclass[2020]{20C20, 18F99}
\keywords{Modular representation theory, permutation module, stable category, tensor-triangular geometry, periodic locus}

\maketitle

\ \hfill \textit{Dedicated to Dave Benson on the occasion of his seventieth birthday}
\medbreak

\section{Introduction}
\label{sec:intro}%

For the whole paper, $G$ is a finite group and $k$ a field of characteristic~$p>0$.
We introduce the \emph{stable permutation category} of~$G$ with coefficients in~$k$
\[
\StPerm(G;k).
\]
It is to the usual stable module category $\StMod(kG)$, of $kG$-modules modulo projectives, what permutation modules are to general modules.
However $\StPerm(G;k)$ is \emph{not} defined as the additive quotient of the category of permutation modules by the subcategory of projectives.
We explain in~\Cref{Rem:thick(perm)=stab} why this definition would not be very interesting.
In fact, our stable permutation category~$\StPerm(G;k)$ is not even a subcategory of $\StMod(kG)$ but rather an intermediate localization between the \emph{derived category of permutation modules}~$\DPerm(G;k)$ introduced in~\cite{BalmerGallauer23b} and the stable module category~$\StMod(kG)$. Let us remind the reader.

We write $\perm(G;k)^\natural$ for the additive subcategory of $p$-permutation $kG$-modules inside the abelian category~$\mmod{kG}$ of finitely generated~$kG$-modules (\Cref{Rec:perm}). The bounded homotopy category~$\Kb(\perm(G;k)^\natural)$ is the compact part of~$\DPerm(G;k)$. On the other hand, the bounded derived category $\Db(kG):=\Db(\mmod{kG})$ is the compact part of $\Komp(\Inj(kG))$ by Krause~\cite{Krause05b}.
The stable permutation category~$\StMod(G;k)$ fits in a commutative square of localizations:
\begin{equation}
\label{eq:four-tt-cats}%
\vcenter{\xymatrix@C=2em{\DPerm(G;k) \ar@{->>}[r]^-{\Upsilon} \ar@{->>}[d]_-{p}
& \Komp(\Inj(kG)) \ar@{->>}[d]^-{q}
\\
{\StPerm(G;k)} \ar@{->>}[r]^-{\bar\Upsilon}
& \StMod(kG)
}}
\qquad
\vcenter{\xymatrix@C=2em{\Kb(\perm(G;k)^\natural) \ar@{->>}[r]^-{\Upsilon} \ar@{->>}[d]_-{p}^(.65){\natural}
& \Db(kG) \ar@{->>}[d]^-{q}
\\
{\stperm(G;k)} \ar@{->>}[r]^-{\bar\Upsilon}
& \stmod(kG).\!\!
}}\kern-1em
\end{equation}
The left-hand square displays big categories and the right-hand one the corresponding compact parts.
In each square the left-hand column involves permutation modules, the right-hand column involves arbitrary $kG$-modules, and the horizontal functors~$\Upsilon$ and~$\bar\Upsilon$ are induced by the canonical inclusion $\perm(G;k)^\natural\hookrightarrow\mmod{kG}$.
The vertical functor~$q$ is the finite localization away from perfect complexes, by Rickard~\cite{Rickard89}. We \emph{define} $\StPerm(G;k)$ as the finite localization of $\DPerm(G;k)$ away from so-called \emph{\eqperf} complexes; the latter are those bounded complexes of $p$-permutation modules that are not only perfect in the derived category $\Db(kG)$ but such that \emph{all their modular $H$-fixed points} (\aka Brauer quotients) are perfect, for all $p$-subgroups~$H\le G$. See details in \Cref{sec:stperm}.

We prove that this stable permutation category is also the localization of the derived permutation category $\DPerm(G;k)$ on the open complement of the finitely many closed points in its spectrum.
Conjecturally, $\StPerm(G;k)$ is also the localization of $\DPerm(G;k)$ on its `periodic locus' in the sense of~\cite{gallauer:periods}. This fact is known to hold for a class of groups containing $p$-groups but remains a conjecture in full generality. See details in \Cref{sec:tt-justifications}.

\medbreak

With the definition established and justified by these alternate interpretations, we investigate when this stable permutation category $\StPerm(G;k)$ is indecomposable as a tt-category. This question might come as a surprise since the other three types of categories in~\eqref{eq:four-tt-cats} are always indecomposable, for instance because the ring of endomorphisms of their $\otimes$-unit $\End(\unit)=k$ is indecomposable.
And yet the stable permutation category can indeed decompose, in very special cases.
\begin{Thm}[\Cref{Thm:indecomposability}]
\label{Thm:indecomposability-intro}%
The stable permutation category $\StPerm(G;k)$ is indecomposable unless the $p$-Sylow of $G$ is cyclic or generalized quaternion.
\end{Thm}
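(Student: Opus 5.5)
A tt-category with compact part $\cK=\stperm(G;k)$ is indecomposable exactly when $\Spc(\cK)$ is connected. So we need to show that $\Spc(\stperm(G;k))$ is connected unless the Sylow is cyclic or generalized quaternion. By the description announced in the introduction, $\StPerm(G;k)$ is the localization of $\DPerm(G;k)$ on the open complement $U$ of the finitely many closed points of $\Spc(\Kb(\perm(G;k)^\natural))$. The task is therefore to understand the connectivity of this punctured spectrum $U$.

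Recall from \cite{BalmerGallauer23b} the stratification of $\Spc(\Kb(\perm(G;k)^\natural))$. It is a union, over conjugacy classes of $p$-subgroups $H\le G$, of the pieces $V_G(H)\cong\Spc(\Db(k(\WGH)))\cong \Spech(\Hm^\sbull(\WGH,k))$. These pieces are glued along closure relations: modular fixed-point functors relate $H$ and $K$ when $H\normaleq K$ with $K/H$ cyclic of order $p$. The closed points correspond to the unique closed point (the irrelevant ideal) of each piece. Removing them leaves, in each piece, the projective variety $\Proj(\Hm^\sbull(\WGH,k))$. The gluings happen via points that survive in $U$; I expect them to run between the generic point of the piece for $K$ and points of the piece for $H$, possibly including the closed points, which is exactly the delicate part.

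The plan has three steps.

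\textbf{Step 1.} Reduce to a Sylow $p$-subgroup $P$. Restriction $\StPerm(G;k)\to\StPerm(P;k)$ induces a surjection on spectra, since $\Res$ is faithful on compacts up to direct summands with $\Ind$. Surjective continuous images of connected spaces are connected, so connectedness for $P$ gives connectedness for $G$. For the converse direction, the cyclic and quaternion cases for general $G$, I would exhibit the decomposition by hand.

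\textbf{Step 2.} For a $p$-group $P$, argue by induction along the subgroup poset. The piece for $H=P$ has $\WGH$ trivial, so its $\Proj$ is empty and it contributes nothing. For $H$ with $\WGH$ of $p$-rank at least $2$, $\Proj(\Hm^\sbull(\WGH,k))$ is connected by Quillen/Carlson connectedness: the projective support variety is connected exactly when the $p$-rank is at least $2$. If $P$ is neither cyclic nor generalized quaternion, then $P$ contains a subgroup $E\cong C_p\times C_p$ (standard). I would then try to show that every piece for $H$ with nonempty $\Proj$ links, via the closure relations, to the piece for the trivial subgroup, i.e. to $\Proj\Hm^\sbull(P,k)$. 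This last space is connected because $P$ has rank at least $2$. To produce the links, use the generic points: for $H\normaleq K$ with $K/H\cong C_p$, the closure of a point in the $H$-piece meets the $K$-piece. The chain $1=H_0<H_1<\dots<H_n=P$ then connects pieces in which the relevant Weyl groups have nonempty Proj.

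\textbf{Step 3.} For cyclic or generalized quaternion $P$, periodic cohomology makes each $\Proj$ finite and discrete. The decomposition should then be visible directly: idempotent-complete summands arise from the components of $U$.

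The main obstacle is Step 2, namely controlling exactly which points of different strata are specializations of one another once the closed points are removed. The gluing in \cite{BalmerGallauer23b} is subtle, since it involves the "twisted cohomology" open pieces. I would first treat the elementary abelian case $E$ explicitly, using the known picture of $\Spc$ for $C_p\times C_p$. I would then propagate to general $P$ through restriction to subgroups $E$ that cover all strata, using that every $p$-subgroup $H$ with noncyclic, nonquaternion normalizer quotient sits in such an $E$-configuration.
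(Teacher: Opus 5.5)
\textbf{There is a genuine gap in Step~2.} Your Step~1 matches the paper's reduction to a Sylow subgroup, and you are right that Step~2 is the heart of the matter. But the linking mechanism you propose there cannot work.

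A step $H\normaleq K$ with $K/H\cong C_p$ contributes, through the section $(K,H)$, a single point of $U$: the image of $\Spc(\stperm(C_p;k))=\Spc(\stmod(kC_p))=\{\gp\}$ in the $H$-stratum. Inside $\Spc(\cK(C_p))$ the only specializations of $\gp$ are the two closed points. These map to the closed points $\cM(K)$ and $\cM(H)$, since such maps send closed points to closed points \cite[Lemma~11.9\,(b)]{tt-perm}. So this section links the $H$-piece to the $K$-piece only through $\cM(K)$, which has been removed.

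A sanity check shows the argument proves too much. Your chain construction never uses the hypothesis on $P$. For $P=C_{p^n}$, the chain $1=H_n<\cdots<H_0=P$ would connect $\gp_1,\ldots,\gp_n$. Yet $\Spc(\stperm(C_{p^n};k))$ is discrete (\Cref{Exa:decomp-cyclic}), and this is exactly how the cyclic decomposition arises.

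The fallback in your last paragraph also falls short, for two reasons. First, restricting to elementary abelian \emph{subgroups} $E\le P$ does not reach all strata. Points of the $H$-stratum come from elementary abelian subgroups of $\Weyl{P}{H}$, i.e.\ from \emph{sections} $(H',H)$, and strata of non-elementary-abelian $H$ are never hit. Second, a criterion on $\Weyl{P}{H}$ is the wrong one. Strata whose Weyl group is cyclic, so that their $\Proj$ is a single point (e.g.\ the index-$p$ subgroups of $C_p\times C_p$), must be linked too. Connectedness of each individual $\Proj\Hm^\sbull(\Weyl{P}{H},k)$ is true but beside the point.

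The paper's proof supplies three ingredients you are missing.
\begin{enumerate}
\item The Colimit Theorem for stable permutation categories (\Cref{Thm:colim-stperm}). It says $\Spc(\stperm(P;k))$ is covered by the images of $\Spc(\stperm(H/K;k))$ over elementary abelian sections $(H,K)\in\mathcal{E}_p(P)$. This holds because the structure maps satisfy $f^{-1}(\text{closed points})=\text{closed points}$.
\item For $E\neq 1$ elementary abelian, $\Spc(\stperm(E;k))$ is irreducible. The generic point of $\Spc(\cK(E))$ survives removal of the closed points, which have height equal to the rank. Hence these images are connected, and two images related by a morphism of $\mathcal{E}_p(P)$ overlap.
\item The hypothesis on $P$ used in the form of \Cref{Prop:bottleneck}. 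If $P$ is neither cyclic nor generalized quaternion, every $1\neq H<P$ admits strict inclusions $K'<H<H'$ with $H'/K'$ elementary abelian (necessarily of rank $\ge2$). This rests on \Cref{Lem:bottleneck}: a $p$-group with a non-trivial proper subgroup all of whose index-$p$ overgroups are cyclic must be cyclic or generalized quaternion. The existence of one $C_p\times C_p\le P$ is logically equivalent for $p$-groups, but it is not the form that drives the argument.
\end{enumerate}
With these in hand, the paper argues by induction on $[P:H]$ along the zigzag $(H,K)\to(H,F')\leftarrow(H,K')\to(H',K')$ in $\mathcal{E}_p(P)$, where $F'$ is the Frattini subgroup of $H$. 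This places every image in the connected component containing the image of the top section $(P,F)$, with $F$ the Frattini subgroup of $P$.
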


We obtain this result by proving that the spectrum is connected as a topological space. Conversely, for $G$ with cyclic or generalized quaternion $p$-Sylow, the spectrum of $\stperm(G;k)$ is disconnected and this forces $\StPerm(G;k)$ to be a product of two tt-categories, or more.
Amusingly, the connected components of the spectrum that appear in those cases are reminiscent of spectra of other tt-categories and this observation led us to \emph{predict} what the decompositions of~$\StPerm(G;k)$ should be at the categorical level. In both cases, our geometric guess was confirmed:
\begin{Thm}[\Cref{Thm:cyclic}]
\label{Thm:cyclic-intro}%
The stable permutation category $\StPerm(C_{p^n};k)$ of the cyclic group of order~$p^n$ is tt-equivalent to the product of the $n$ usual stable module categories $\StMod(kC_p)\times\cdots \times\StMod(kC_{p^i})\times \cdots \times\StMod(kC_{p^n})$.
\end{Thm}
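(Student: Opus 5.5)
The plan is to construct $n$ tt-functors $\StPerm(C_{p^n};k)\to\StMod(kC_{p^i})$, one for each $1\le i\le n$, and show that their product is an equivalence.

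\textbf{The functors.} Write $G=C_{p^n}$ and let $H_j\le G$ be the subgroup of order $p^j$. The modular fixed-point functors (Brauer quotients) $\Psi^{H_j}\colon\DPerm(G;k)\to\DPerm(G/H_j;k)$ are tt-functors, and $G/H_j\cong C_{p^{n-j}}$. For $0\le j\le n-1$ we compose with $\Upsilon$ and $q$ for the quotient:
\[
F_j\colon \DPerm(G;k)\xto{\Psi^{H_j}}\DPerm(G/H_j;k)\xto{q\,\Upsilon}\StMod(k[G/H_j]).
\]
By definition, an \eqperf{} complex has all Brauer quotients perfect. So $F_j$ kills the compact \eqperf{} complexes and factors through the finite localization $\StPerm(G;k)$. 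Taking the product gives a coproduct-preserving tt-functor
\[
F\colon\StPerm(G;k)\to\prod_{j=0}^{n-1}\StMod(kC_{p^{n-j}}).
\]
Conversely, an object $x$ of $\Kb(\perm(G;k)^\natural)$ is killed by all $F_j$ exactly when every $\Psi^{H_j}(x)$, $j<n$, is perfect. The remaining case $j=n$ lands in $\Db(k)$, where everything is perfect. Hence $x$ is \eqperf{}, so $F$ is conservative on compacts. Because $F$ preserves coproducts and compacts, and both sides are compactly generated, it is enough to prove that $F$ is fully faithful and essentially surjective on compact parts, i.e.\ on $\stperm(G;k)$ after idempotent completion.

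\textbf{Geometric guide and idempotents.} On spectra, the earlier description of $\Spc$ of $\DPerm(C_{p^n};k)$ is a chain of $n+1$ copies of $\Spc(\Db(kC_{p^{n-j}}))$, glued at closed points. Removing the closed points leaves $n$ disjoint punctured components. Each component is homeomorphic to $\Spc(\stmod(kC_{p^{n-j}}))$, which is a single point. So $\Spc(\stperm(G;k))$ is discrete with $n$ points, and connected components correspond to idempotent decompositions. This gives orthogonal idempotent triangles, i.e.\ a splitting $\StPerm(G;k)\simeq\prod_j\cS_j$ in which $F_j$ factors through $\cS_j$ and vanishes on the other factors. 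It therefore remains to show that each induced functor $\cS_j\to\StMod(kC_{p^{n-j}})$ is an equivalence.

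\textbf{Main obstacle: full faithfulness on each factor.} The generators of $\stperm(G;k)$ are the images of $k(G/H_j)$. I would compute their endomorphism rings in the localization, i.e.\ as maps out of a colimit over \eqperf{} approximations, and compare them with Tate cohomology $\hat H^*(C_{p^{n-j}};k)$. This requires an explicit \eqperf{} complex, a Koszul-type periodic complex of permutation modules for each $j$. It should give a $2$-periodicity element, built from $k(G/H_j)\to k(G/H_{j+1})$ and similar maps, which inverts in $\cS_j$ and matches the periodicity generator of $\hat H^*(C_{p^{n-j}};k)$.

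Concretely, I would show that in $\cS_j$ the unit is a summand generated by $k(G/H_j)$, that $k(G/H_i)$ for $i\ne j$ maps to zero or to sums of it, and that $\Hom_{\cS_j}(\unit,\Sigma^*\unit)\cong\hat H^*(C_{p^{n-j}};k)$ via $F_j$. Since $\stmod(kC_{p^m})$ is generated by its unit $k$ (it has finitely many indecomposables, all built from $k$), full faithfulness on the generating unit together with essential surjectivity of thick closures finishes the argument. I expect this Hom computation in the localization, i.e.\ controlling the colimit over \eqperf{} maps, to be the hardest step. The first thing I would try is an explicit Rickard-style idempotent triangle $e_j\to\unit\to f_j$ in $\DPerm$, with $e_j$ a colimit of \eqperf{} complexes, so that Homs into $f_j$ compute the localized Homs.
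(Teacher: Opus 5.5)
\textbf{Verdict: genuine gap.} The reduction to $n$ factors is fine, but the step carrying the theorem's content is only a programme, and the concrete claims in it are false.

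\textbf{What works.} The functors $F_j$ descend to $\StPerm(C_{p^n};k)$. Their joint kernel on compacts is exactly the \eqperf\ complexes. The discreteness of $\Spc(\stperm(C_{p^n};k))$, as in \Cref{Exa:decomp-cyclic}, splits the category into factors $\cS_j$, with $F_j$ living on $\cS_j$.

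\textbf{What is missing.} That reduction is the easy part. The theorem says each induced functor $\cS_j\to\StMod(kC_{p^{n-j}})$ is an equivalence, and for this you only outline a computation of Hom groups in the Verdier quotient via Koszul-type complexes and Rickard idempotents, which you do not carry out. The specific claims you make along the way do not hold:
\begin{itemize}
\item $\Psi^{H_j}(k(G/H_j))=k(G/H_j)$ is the free $k(G/H_j)$-module. So the point of $\cS_j$ is not in the support of $k(G/H_j)$, and $k(G/H_j)$ is \emph{zero} in $\cS_j$; it cannot produce the unit.
\item For $j<i<n$, the image of $k(G/H_i)$ is $k(C_{p^{n-j}}/C_{p^{i-j}})$. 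This is an indecomposable non-projective module of dimension $p^{n-i}>1$, so it is neither zero nor a sum of copies of the unit.
\end{itemize}
Consequently neither $\mathrm{Hom}_{\cS_j}(\unit,\Sigma^\ast\unit)\cong\widehat{H}^\ast(C_{p^{n-j}};k)$ nor $\cS_j=\mathrm{thick}(\unit)$ is established, and both are needed for your concluding argument.

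\textbf{The missing idea: inflation.} It removes any need to compute morphisms in the quotient. Let $N$ be the subgroup of order $p$ (your $H_1$).
\begin{itemize}
\item $\Infl^{G/N}_G\colon\DPerm(G/N;k)\to\DPerm(G;k)$ is fully faithful because $kG\onto k(G/N)$ is a ring epimorphism (\Cref{Prop:Infl-ff}), and $\Psi^N\circ\Infl^{G/N}_G\cong\Id$.
\item Full faithfulness survives localization on an open and its preimage (\Cref{Prop:ff-equiv}).
\item Essential surjectivity holds because every generator $k(G/H)$ with $H\neq 1$ is inflated from $G/N$, as $N\le H$. The only other generator, $kG$, is supported in $\supp(k(G/N))$, which is removed.
\end{itemize}
This identifies the factor of $\StPerm(G;k)$ lying away from the cohomological open of $G$ with $\StPerm(G/N;k)$ via $\Psi^N$. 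The complementary factor is $\StMod(kG)$ directly by \Cref{Prop:stperm->>stmod}, since it is the localization on a clopen point; even in your framework, the factor $j=0$ needs no computation.

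This is \Cref{Lem:disconnect}. Induction on $n$, with $G/N\cong C_{p^{n-1}}$, then gives the theorem, and the resulting components are exactly your $F_j$ by transitivity of modular fixed points.
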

\begin{Thm}[\Cref{Thm:quaternion}]
\label{Thm:quaternion-intro}%
For $p=2$ and $n\ge 3$, the stable permutation category $\StPerm(Q_{2^n};k)$ of the
generalized quaternion group~$Q_{2^n}$ of order~$2^n$ is tt-equivalent to the product
$\StPerm(D_{2^{n-1}};k)\times \StMod(kQ_{2^n})$ of the stable permutation category of the dihedral group~$D_{2^{n-1}}$ of order~$2^{n-1}$ and the usual stable module category of~$Q_{2^n}$. Both of these factors are indecomposable.
\end{Thm}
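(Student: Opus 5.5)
The plan is to build an explicit tt-functor
\[
\Phi=(\bar\Psi,\bar\Upsilon)\colon\StPerm(Q_{2^n};k)\too\StPerm(D_{2^{n-1}};k)\times\StMod(kQ_{2^n})
\]
and show it is an equivalence by matching it with the decomposition forced by the disconnected spectrum. Write $G=Q_{2^n}$ and let $Z=Z(G)\cong C_2$ be its unique subgroup of order~$2$, so that $G/Z\cong D_{2^{n-1}}$. The basic group-theoretic input is that \emph{every} nontrivial subgroup of~$G$ contains~$Z$, since $G$ has a unique involution. The second component of~$\Phi$ is the functor $\bar\Upsilon$ from~\eqref{eq:four-tt-cats}. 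The first component is induced by the modular $Z$-fixed-points functor $\Psi^Z\colon\DPerm(G;k)\to\DPerm(G/Z;k)$.

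\emph{Step 1: $\Phi$ is well defined.} We must check that both components kill \eqperf{} complexes.
\begin{itemize}
\item $\Upsilon$ sends such a complex to a perfect one, which dies in $\StMod(kG)$.
\item For $\Psi^Z$, the $\bar H$-fixed points of $\Psi^Z(X)$ are the $H$-fixed points of~$X$ for $Z\le H$, which are perfect. So $\Psi^Z$ preserves equivariant perfection and descends to the stable quotients.
\end{itemize}
Both components are coproduct-preserving tt-functors.

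\emph{Step 2: the spectrum splits.} Use the description of $\Spc(\Kb(\perm(G;k)^\natural))$ as a union of strata indexed by conjugacy classes of $p$-subgroups~$H$, each stratum homeomorphic to $\Spc(\Db(k(N_GH/H)))$ via modular fixed points. The strata with $H\ge Z$ are exactly the image of $\Spc(\Psi^Z)$, which is a homeomorphism onto its image. The $H=1$ stratum is $\Spc(\Db(kG))$; since $G$ has $2$-rank one, its non-closed part $\Spc(\stmod(kG))$ is a single point. Passing to $\stperm(G;k)$ removes the closed points. The specialization relations between the $1$-stratum and the other strata only go through the closed point $\mathfrak m$ of $\Spc(\Db(kG))$, which is removed. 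Hence
\[
\Spc(\stperm(G;k))=\Spc(\stmod(kG))\sqcup\Spc(\stperm(G/Z;k)),
\]
with both pieces open. This gives an idempotent splitting $\StPerm(G;k)\simeq\cat U_1\times\cat U_2$, where $\cat U_1$ is supported on the stmod point and $\cat U_2$ on the rest.

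\emph{Step 3: identify the factors.}
\begin{itemize}
\item $\bar\Upsilon$ is a finite localization by~\eqref{eq:four-tt-cats}. Its kernel is supported away from $\Spc(\stmod(kG))$, so it restricts to an equivalence $\cat U_1\simeq\StMod(kG)$ and kills~$\cat U_2$.
\item For the other factor, use inflation $\Infl\colon\DPerm(G/Z;k)\to\DPerm(G;k)$, which satisfies $\Psi^Z\circ\Infl\cong\Id$. Hence $\Psi^Z$ is a finite localization. Its kernel is generated by the $k(G/H)$ with $Z\not\le H$, that is, by $kG$ alone, which is a perfect complex. Therefore $\DPerm(G;k)/\Loc{kG}\simeq\DPerm(G/Z;k)$.
\item Under this equivalence, the further localization away from \eqperf{} complexes corresponds to the one defining $\StPerm(G/Z;k)$, by the fixed-point compatibility of Step~1. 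So $\bar\Psi$ restricts to an equivalence $\cat U_2\simeq\StPerm(G/Z;k)$ and kills~$\cat U_1$.
\end{itemize}
Combining the two items, $\Phi$ is an equivalence.

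\emph{Main obstacle.} I expect the hardest part to be the kernel comparisons in Step~3. One must show that the kernel of $\bar\Psi$ on $\StPerm(G;k)$ is exactly the thick part supported on the stmod point. Equivalently, on compacts, the thick tensor ideal generated by $kG$ together with the \eqperf{} complexes must coincide with the preimage under $\Psi^Z$ of the \eqperf{} complexes over $G/Z$. I would attack this by supports: both ideals have the same support, namely the $1$-stratum plus the closed points, and the classification of thick tensor ideals via the spectrum then identifies them. The analogous statement for $\bar\Upsilon$ is handled the same way.

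\emph{Indecomposability of the factors.} $\StMod(kG)$ is indecomposable because its spectrum is a single point. $\StPerm(D_{2^{n-1}};k)$ is indecomposable by \Cref{Thm:indecomposability-intro}, since for $n\ge3$ the dihedral group $D_{2^{n-1}}$ (including the Klein four group when $n=3$) is neither cyclic nor generalized quaternion.
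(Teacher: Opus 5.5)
Your overall architecture is the paper's: for $G=Q_{2^n}$ and $N=Z$, the statement is \Cref{Lem:disconnect}. You disconnect the spectrum, identify the $\{*\}$-factor via $\bar\Upsilon$ and the other factor via $\Psi^Z$, and deduce indecomposability from \Cref{Thm:indecomposability}. There is a slip in Step~2. The closed point $\mathfrak m=\cM(1)$ has no generalization outside the $1$-stratum. The $1$-stratum is attached to the rest through $\cM(Z)$, since $\overline{\{*\}}=\supp(k(G/Z))=\{*,\cM(1),\cM(Z)\}$. The conclusion survives because $\cM(Z)$ is also removed. An actual argument needs, as in the paper, that $Z_1=\Img(\psi^Z)$ and $Z_2=\supp(k(G/Z))$ are closed with $Z_1\cap Z_2=\{\cM(Z)\}$.

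The genuine gap is in Step~3, second and third bullets.
\begin{itemize}
\item From $\Psi^Z\circ\Infl\cong\Id$ you only get that $\Psi^Z$ is essentially surjective with a section, not that it is a localization. Its kernel on compacts is indeed $\ideal{kG}$, but the claim $\DPerm(G;k)/\Loc{kG}\simeq\DPerm(G/Z;k)$ is false. The compact part of the left side has spectrum $\Spc(\cK(G))\sminus\supp(kG)=\Spc(\cK(G))\sminus\{\cM(1)\}$, which still contains~$*$. By contrast, $\psi^Z$ identifies $\Spc(\cK(G/Z))$ with $Z_1$, and $*\notin Z_1$.
\item Your chain proves too much. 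Since $kG$ is already \eqperf, it would give $\StPerm(G;k)\simeq\StPerm(G/Z;k)$ outright, erasing the $\StMod(kG)$ factor you found in Step~2. So it cannot yield that $\bar\Psi$ is an equivalence on $\cat{U}_2$.
\item The `main obstacle' paragraph has the same defect. The ideal generated by $kG$ and $\Keqperf{G}$ is just $\Keqperf{G}$, supported only on the closed points. But $(\Psi^Z)^{-1}(\Keqperf{G/Z})$ contains $k(G/Z)$, whose support contains~$*$. In any case, matching kernels would only give conservativity, not full faithfulness.
\end{itemize}

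The missing ingredient is that inflation $\DPerm(G/Z;k)\to\DPerm(G;k)$ is fully faithful (\Cref{Prop:Infl-ff}). This survives localization (\Cref{Prop:ff-equiv}) provided you localize correctly:
\begin{itemize}
\item localize $\DPerm(G/Z;k)$ away from $Y=\supp(k(G/Z))=\{\cM_{G/Z}(1)\}$;
\item localize $\DPerm(G;k)$ away from $f^{-1}(Y)=\supp(k(G/Z))=Z_2\ni *$, where $f=\Spc(\Infl^{G/Z}_G)$, and not merely away from $\supp(kG)$.
\end{itemize}
Essential surjectivity then holds because every $k(G/H)$ with $H\neq1$ is inflated, and $kG$ is supported in~$Z_2$. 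Removing the remaining closed points on both sides gives $\StPerm(G/Z;k)\isoto\cat{U}_2$. Since $\Psi^Z$ is a left inverse, it is the inverse equivalence.
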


The organization of the paper is the following. \Cref{sec:stperm} gives the definition of~$\StPerm(G;k)$, with equivalent formulations and motivation from the perspective of tt-geometry.
We gather some basic properties in \Cref{sec:general}, including Mackey 2-functoriality. We also prove that the modular fixed-points (Brauer quotients) survive on stable permutation categories and we prove a `Colimit Theorem' reducing the description of the spectrum of $\StPerm(G;k)$ to elementary abelian \emph{subquotients} of~$G$, analogous to the result for~$\DPerm(G;k)$ in~\cite{tt-perm}.
In \Cref{sec:indecomposability}, we prove \Cref{Thm:indecomposability-intro}, after some group-theoretic preparation in~\Cref{sec:bottleneck}.
In \Cref{sec:cyclic-quaternion} we give the decompositions of \Cref{Thm:cyclic-intro,Thm:quaternion-intro}.
In fact, similar decompositions hold for any finite group~$G$ whose $p$-Sylow is cyclic or generalized quaternion.


\section{Definition via \eqperf\ complexes}
\label{sec:stperm}%

We write $\mmod{kG}$ for the abelian category of finitely generated $kG$-modules and write $\Db(kG)$ for its bounded derived category.
Before turning to permutations, let us remind the reader of the usual stable \emph{module} category.
\begin{Rec}
\label{Rec:stmod}%
The \emph{stable module category} $\stmod(kG)$ is the additive quotient of the category~$\mmod{kG}$ by the subcategory of projective modules.
(Additive quotients by a subcategory are obtained by keeping the same objects and by modding out the morphisms that factor via an object of the subcategory.)
It is triangulated by Happel~\cite{Happel88}. In fact, by Rickard~\cite{Rickard89}, we have a triangular-equivalence
\begin{equation}
\label{eq:Rickard}%
\stmod(kG)\cong \Db(kG)/\Dperf(kG)
\end{equation}
with the Verdier quotient of the derived category $\Db(kG)$ by the thick subcategory of perfect complexes.
The category $\Db(kG)/\Dperf(kG)$ is effortlessly triangulated, as any Verdier quotient. In contrast, we need $kG$ to be Frobenius to obtain the equivalence~\eqref{eq:Rickard} and to show that the additive quotient $\stmod(kG)$ is triangulated.
\end{Rec}

The problem we want to address is to find the correct analogue of the stable module category $\stmod(kG)$ in the context of \emph{permutation modules}.
We assume that the reader is aware of the importance of permutation modules in representation theory and beyond, \eg in the theory of motives. See~\cite{BalmerGallauer23b} if necessary.

\begin{Rec}
\label{Rec:perm}%
The full additive subcategory $\perm(G;k)$ of~$\mmod{kG}$ of \emph{permutation} $kG$-modules consists of those isomorphic to~$k(X)$ for a finite $G$-set~$X$.
We slightly enlarge it by including direct summands~$\perm(G;k)^\natural\subseteq\mmod{kG}$; the latter are called \emph{$p$-permutation} or \emph{trivial source} $kG$-modules.
(If $G$ is a $p$-group then $\perm(G;k)^\natural=\perm(G;k)$.)
As $\perm(G;k)^\natural$ is just an additive category, its bounded derived category is simply its bounded homotopy category, no need to invert quasi-isomorphisms. We shall denote it~$\cK(G)$ as we did in~\cite{tt-perm}:
\[
\cK(G):=\Db(\perm(G;k)^\natural)=\Kb(\perm(G;k)^\natural).
\]
The `big' \emph{derived category of permutation modules} $\DPerm(G;k)$ of~\cite{tt-perm} is a rigidly-compactly generated tt-category whose compact part is the above~$\cK(G)$.
This `Ind-completion' can be realized for instance inside the unbounded homotopy category~$\Komp(\Perm(G;k))$ of not necessarily finitely generated permutation modules, as the localizing subcategory generated by~$\perm(G;k)$.
Its rigid-compact objects is indeed the subcategory $\DPerm(G;k)^c=\textrm{thick}(\perm(G;k))=\cK(G)$.

Alternatively, $\DPerm(G;k)$ is the homotopy category of modules over the Bredon cohomology spectrum~$H\underline{k}$ in genuine $G$-spectra, see~\cite{fuhrmann2025modularfixedpointsequivariant}.
\end{Rec}

\begin{Rem}
\label{Rem:perm-mod-loc}%
The inclusion $\perm(G;k)^\natural\subseteq\mmod{kG}$ does not yield an inclusion on derived categories.
Quite the opposite, the canonical functor, that we denote $\Upsilon\colon\cK(G)\to \Db(kG)$, is actually a Verdier quotient by~\cite[Theorem~5.13]{BalmerGallauer23a}.
As $\stmod(kG)$ is a further Verdier quotient of~$\Db(kG)$, we immediately obtain:
\end{Rem}

\begin{Cor}
\label{Cor:thick(perm)=stab}%
The thick subcategory of $\stmod(kG)$ generated by permutation modules is the whole~$\stmod(kG)$.
\qed
\end{Cor}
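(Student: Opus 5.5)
The argument composes the two Verdier quotients $\cK(G)\xto{\Upsilon}\Db(kG)\xto{q}\stmod(kG)$ and then uses that thick closure is preserved by essentially surjective exact functors.

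The first step is to note that $\cK(G)=\Kb(\perm(G;k)^\natural)$ is generated as a thick subcategory by the $p$-permutation modules placed in degree zero. Every bounded complex is an iterated extension, via its brutal truncations, of shifts of its terms, and each term lies in $\perm(G;k)^\natural$. A $p$-permutation module is a direct summand of a permutation module, so it lies in $\mathrm{thick}(\perm(G;k))$. Hence $\cK(G)=\mathrm{thick}(\perm(G;k))$, as already recalled in \Cref{Rec:perm}.

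The second step uses \Cref{Rem:perm-mod-loc}: $\Upsilon\colon\cK(G)\to\Db(kG)$ is a Verdier quotient by \cite[Theorem~5.13]{BalmerGallauer23a}. Composing with the Verdier quotient $q\colon\Db(kG)\to\Db(kG)/\Dperf(kG)$ gives a functor which is again a Verdier quotient, in particular exact and essentially surjective on objects. By Rickard's equivalence~\eqref{eq:Rickard}, the target identifies with $\stmod(kG)$.

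The third step carries thick closures across this composite. Let $\cS:=\mathrm{thick}_{\stmod(kG)}(\perm(G;k))$. Its preimage $(q\circ\Upsilon)^{-1}(\cS)$ is a thick subcategory of $\cK(G)$, because $q\circ\Upsilon$ is exact and $\cS$ is closed under summands, shifts and cones. This preimage contains $\perm(G;k)$, so by the first step it is all of $\cK(G)$. Essential surjectivity of $q\circ\Upsilon$ then shows that every object of $\stmod(kG)$ is isomorphic to an image, hence lies in $\cS$.

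The only point needing care is compatibility: the image of a permutation module $k(X)$ under $q\circ\Upsilon$ must be $k(X)$ viewed in $\stmod(kG)$. This holds because Rickard's equivalence sends a module, as a complex concentrated in degree zero, to the same module in the stable category. No step is a genuine obstacle; the substance lies entirely in the cited result that $\Upsilon$ is a Verdier quotient.
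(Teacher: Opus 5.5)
Your proof is correct and follows the same route as the paper, which deduces the corollary directly from \Cref{Rem:perm-mod-loc}. There, $\stmod(kG)$ is a Verdier quotient of $\Db(kG)$, which is itself a Verdier quotient of $\cK(G)=\mathrm{thick}(\perm(G;k))$. You simply spell out the details the paper leaves implicit: brutal truncations, thickness of the preimage under an exact functor, and compatibility with Rickard's equivalence.
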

\begin{Rem}
\label{Rem:thick(perm)=stab}%
Since free modules are permutation, the category $\perm(G;k)^\natural$ contains the subcategory $\proj(kG)$ of finitely generated projective $kG$-modules.
One can then form the \emph{additive} quotient
\begin{equation}
\label{eq:stab-caca}%
\frac{\perm(G;k)^\natural}{\proj(kG)}.
\end{equation}
This additive category has been called the `stable category of $p$-permutation modules' in (unpublished) literature. We shall avoid this terminology.
The additive quotient~\eqref{eq:stab-caca} is clearly a full subcategory of the stable module category~$\stmod(kG)$.
However \eqref{eq:stab-caca} is not triangulated in any evident way.
In fact \eqref{eq:stab-caca} generates the whole $\stmod(kG)$ as a thick triangulated subcategory by \Cref{Cor:thick(perm)=stab}.
\end{Rem}

In view of the above discussion, we propose to define $\stperm(G;k)$ as a suitable localization of the bounded derived category of $p$-permutation modules~$\cK(G)$. We present justifications in the subsequent \Cref{sec:tt-justifications}.

\begin{Rec}
\label{Rec:modular-fixed-pts}%
Let $H\le G$ be a $p$-subgroup and write $\WGH=N_G(H)/H$ for the Weyl group of~$H$ in~$G$.
There exists (see~\cite[\S\,5]{tt-perm}) a tensor functor
\[
\Psi^H\colon \perm(G;k)^\natural\to \perm(\WGH;k)^\natural
\]
characterized by the property that $\Psi^H(kX)\cong k(X^H)$ for every finite $G$-set~$X$.
It is called \emph{modular $H$-fixed-points} or \emph{Brauer quotient}. It induces a tt-functor on homotopy categories (applying $\Psi^H$ degreewise on complexes) that we still denote~$\Psi^H\colon\cK(G)\to \cK(\WGH)$. Composed with the canonical localization of \Cref{Rem:perm-mod-loc} we obtain a tt-functor to the derived category of the Weyl group
\[
\check\Psi^H\colon \cK(G)\xto{\Psi^H}\cK(\WGH)\overset{\ \Upsilon\ }{\onto}\Db(k(\WGH)).
\]
\end{Rec}

\begin{Def}
\label{Def:G-perfect}%
A bounded complex~$C\in\cK(G)$ of $p$-permutation $kG$-modules is called \emph{\eqperf} if its image under modular $H$-fixed-points functor $\Psi^H(C)$ is a perfect complex over the Weyl group of~$H$, for every $p$-subgroup~$H\le G$.
This means that $\Psi^H(C)$ is \emph{quasi-isomorphic} to a bounded complex of finitely generated projective $k(\WGH)$-modules, or in formula~$\check\Psi^H(C)\in\Dperf(k(\WGH))$, for every~$H$.
Let us denote by
\[
\Keqperf{G}=\DPerm(G;k)^c_{\textrm{eq-perf}}
\]
the subcategory of~$\cK(G)=\DPerm(G;k)^c$ consisting of \eqperf\ complexes.
Equivalently,
\[
\Keqperf{G}=\Ker\left(\cK(G)\xto{(\Psi^H)}\prod_{H\le G}\Db(k(\WGH))\xto{\eqref{eq:Rickard}}\prod_{H\le G}\stmod(k(\WGH))\right)
\]
can be written as the kernel of a tt-functor (where $H$ runs through the (conjugacy classes of) $p$-subgroups).
In particular, it is a tt-ideal.
\end{Def}

\begin{Exa}
\label{Exa:naive-G-perfect}%
Every bounded complex of projective $kG$-modules is \eqperf. Indeed, $\Psi^H(kG)$ is $kG$ for $H=1$ and zero for $H\neq 1$.
\end{Exa}
\begin{Exa}
\label{Exa:Cp-G-perfect}%
Let $G=C_p$ and $C=0\to k\to kC_p\to kC_p\to k\to 0$ the usual acyclic complex. For $H=1$, the complex $\Psi^1(C)=C$ is quasi-isomorphic to zero hence it is perfect as complex of~$kG$-modules. And for $H=C_p$, the complex $\Psi^{C_p}(C)=0\to k\to 0\to 0\to k\to 0$ is perfect over the trivial group~$\WGH=1$, as every complex of $k$-vector spaces is. It follows that $C$ is \eqperf.
\end{Exa}

\begin{Rem}
\label{Rem:G-perf}%
Let us dispel any possible confusion between the \eqperf\ complexes of \Cref{Def:G-perfect} and two adjacent notions
\[
\Kb(\proj(kG))\ \subseteq\ \Keqperf{G}\ \subseteq\ \Upsilon\inv(\Dperf(kG)).
\]
\begin{enumerate}[\rm(a)]
\item
Every bounded complex of projective $kG$-modules is \eqperf\ (\Cref{Exa:naive-G-perfect}). It follows that, inside~$\cK(G)$, we have an inclusion of tt-ideals $\ideal{kG}=\Kb(\proj(kG)) \subseteq\Keqperf{G}$. It is usually a proper inclusion, as shown in~\Cref{Exa:Cp-G-perfect}.
\smallbreak
\item
\label{it:G-perf-2}%
By definition (take $H=1$ in \Cref{Def:G-perfect}), every \eqperf\ complex is perfect as a complex of $kG$-modules. The converse is true for groups with very small $p$-Sylow but is false in general. Indeed, suppose that $G$ has order divisible by~$p^2$. Pick $H$ a copy of~$C_p$ in the center of a $p$-Sylow of~$G$. Our assumption implies that $p$ divides the order of~$\WGH$.
Let $C={}^\otimes\Ind_1^G(0\to k\xto{1}k\to 0)$ be the `Koszul object' of~\cite[3.15]{tt-perm}, where it is denoted~$\kosname_G(1)$.
This complex is acyclic, hence trivially perfect as a complex of~$kG$-modules. However, $\Psi^H(C)$ generates~$\cK(\WGH)$ as a tt-ideal, by \cite[Lemma~5.21 (for $K=1$)]{tt-perm}. Since  $\WGH$ has order divisible by~$p$, the category~$\cK(\WGH)$ cannot consist only of perfect complexes (for $\stmod(k(\WGH))\neq0$). Therefore $\Psi^H(C)$ is not perfect over~$\WGH$.
\end{enumerate}
\end{Rem}

We are now ready for our central definition.
\begin{Def}
\label{Def:stperm}%
The \emph{stable permutation category} of~$G$ is the idempotent-completion of the Verdier quotient of the derived permutation category~$\cK(G)=\Kb(\perm(G;k)^\natural)$ by the tt-ideal of \eqperf\ complexes (\Cref{Def:G-perfect})
\[
\stperm(G;k)=\bigg(\frac{\cK(G)}{\Keqperf{G}}\bigg)^\natural.
\]
It is therefore an idempotent-complete rigid tensor-triangulated category in such a way that the $\natural$-localization functor $p\colon\cK(G)\to \stperm(G;k)$ is a tt-functor.

Let $\StPerm(G;k)=\DPerm(G;k)/\Loc{\Keqperf{G}}$ be the corresponding finite localization of the `big' derived permutation category $\DPerm(G;k)$ at the tt-ideal $\Keqperf{G}$ of its compacts~$\DPerm(G;k)^c=\cK(G)$. We baptize $\StPerm(G;k)$ the \emph{`big' stable permutation category} of the group~$G$, with coefficients in~$k$.
By Neeman-Thomason localization~\cite{Neeman92b}, we can identify~$\StPerm(G;k)^c$ with~$\stperm(G;k)$ -- hence the idempotent-completion in the latter.
\end{Def}

\begin{Rem}
We do not know whether the idempotent-completion is necessary in the definition of~$\stperm(G;k)$.
The definition of $\stmod(kG)$, either as an additive (\Cref{Rec:stmod}) or triangulated quotient through Rickard's Theorem~\eqref{eq:Rickard}, does not involve an idempotent-completion because one can show that it is already idempotent-complete.
It is conceivable that the same holds true for permutation modules but it would require an argument.
\end{Rem}

\begin{Prop}
\label{Prop:stperm->>stmod}%
We have the commutative diagrams of tt-functors~\eqref{eq:four-tt-cats}.
The stable module category $\StMod(G;k)$ is the finite localization of the stable permutation category $\StPerm(G;k)$ away from the tt-ideal of compacts consisting of complexes of $p$-permutation modules that are perfect as complexes of $kG$-modules.
\end{Prop}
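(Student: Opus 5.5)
The plan is to work first with compact parts, then pass to the big categories by finite localization. Write $\cJ:=\Upsilon\inv(\Dperf(kG))\subseteq\cK(G)$ for the tt-ideal of bounded complexes of $p$-permutation modules that are perfect over~$kG$. By \Cref{Rem:perm-mod-loc}, $\Upsilon\colon\cK(G)\onto\Db(kG)$ is a Verdier quotient, say by the tt-ideal $\cL:=\Ker(\Upsilon)$. Since $\Dperf(kG)$ is a thick tensor ideal of $\Db(kG)$, its preimage $\cJ$ is a thick tensor ideal of $\cK(G)$ containing~$\cL$. The universal property of Verdier quotients, together with the correspondence between thick subcategories of a quotient and thick subcategories containing the kernel, then gives $\Db(kG)/\Dperf(kG)\cong\cK(G)/\cJ$. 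By Rickard~\eqref{eq:Rickard}, this is $\stmod(kG)$, so the composite $q\circ\Upsilon$ is the quotient of $\cK(G)$ by~$\cJ$.

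Next, by \Cref{Rem:G-perf}\,(b) we have $\Keqperf{G}\subseteq\cJ$. Hence $q\circ\Upsilon$ kills the \eqperf\ complexes and factors uniquely through $\cK(G)/\Keqperf{G}$. As $\stmod(kG)$ is idempotent-complete, the factorization extends to the idempotent-completion and yields a tt-functor $\bar\Upsilon\colon\stperm(G;k)\to\stmod(kG)$ with $\bar\Upsilon\circ p=q\circ\Upsilon$. This gives the right-hand square of~\eqref{eq:four-tt-cats}. Moreover, the iterated quotient $(\cK(G)/\Keqperf{G})/(\cJ/\Keqperf{G})\cong\cK(G)/\cJ$ shows that $\bar\Upsilon$ is, up to idempotent-completion, the Verdier quotient of $\stperm(G;k)$ by the thick tensor ideal generated by $p(\cJ)$. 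In other words, it is the localization away from the images of complexes of $p$-permutation modules that are perfect over~$kG$.

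For the big categories, I will use that all four functors are finite localizations of $\DPerm(G;k)$ at tt-ideals of compacts.
\begin{itemize}
\item $\Upsilon$ is the finite localization at~$\cL$, as in~\cite{BalmerGallauer23a}.
\item $q\circ\Upsilon$ is the finite localization at~$\cJ$. This holds because $\Komp(\Inj(kG))\to\StMod(kG)$ is the finite localization at $\Dperf(kG)$ (Krause, Rickard), and iterated finite localizations at $\cL$ and then at $\Dperf(kG)=\cJ/\cL$ compose to the finite localization at the preimage~$\cJ$.
\end{itemize}
Since $\Loc{\Keqperf{G}}\subseteq\Loc{\cJ}$, the localization $\DPerm(G;k)\to\DPerm(G;k)/\Loc{\cJ}$ factors through $\StPerm(G;k)=\DPerm(G;k)/\Loc{\Keqperf{G}}$. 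The induced functor $\bar\Upsilon$ is then the finite localization of $\StPerm(G;k)$ at the image of~$\cJ$, a tt-ideal of compacts by Neeman–Thomason~\cite{Neeman92b}. Commutativity of both squares follows from the universal properties.

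I expect the main obstacle to be the identification of the big finite localizations with the named categories: $\Komp(\Inj(kG))$ as $\DPerm(G;k)/\Loc{\cL}$, and $\StMod(kG)$ as the further localization. I will cite~\cite{BalmerGallauer23a} and Krause~\cite{Krause05b} for these and not reprove them. The remaining care point is the idempotent-completion bookkeeping on compacts, which Neeman–Thomason handles.
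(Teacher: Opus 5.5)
Your proposal is correct and follows the paper's route. \Cref{Rem:G-perf}\,(b) gives $\Keqperf{G}\subseteq\cJ=\Upsilon\inv(\Dperf(kG))$, so $q\circ\Upsilon$ factors uniquely through $p$ as $\bar\Upsilon$, and the kernel of $\bar\Upsilon$ is the image under $p$ of the kernel of $q\circ\Upsilon$, namely the part generated by complexes of $p$-permutation modules that are perfect over~$kG$. Your extra bookkeeping (identifying $q\circ\Upsilon$ as the localization at $\cJ$ via iterated Verdier quotients, and handling idempotent-completion through Neeman--Thomason) simply makes explicit what the paper's short proof uses implicitly.
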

\begin{proof}
By \Cref{Rem:G-perf}\,\eqref{it:G-perf-2}, the \eqperf\ complexes are perfect. This means that the kernel of~$p$ is sent to zero by~$q\circ \Upsilon$ and therefore $q\circ \Upsilon$ localizes along~$p$, to yield the unique factorization~$\bar\Upsilon$. The kernel of the localization~$\bar\Upsilon$ is then the image under~$p$ of the kernel of~$q\circ\Upsilon$, that is, the localizing subcategory generated by complexes of $p$-permutation modules that are perfect in~$\Db(kG)$.
\end{proof}

\section{Justifications of the definition}
\label{sec:tt-justifications}%

We want to justify our definition of the stable permutation category given in \Cref{sec:stperm}, from various tt-geometric perspectives.
We remind the reader of the stable module category precedent.

\begin{Rec}
By~\cite{BensonCarlsonRickard97} the spectrum of $\Db(kG)$ is homeomorphic, via the comparison map, to the homogeneous spectrum of the cohomology (see also~\cite[Proposition~8.5]{balmer:sss}):
\begin{equation}
\label{eq:Spc-Db}%
\Spc(\Db(kG))\isoto \Spech(\Hm^\sbull(G,k)).
\end{equation}
This space is local: Its unique closed point is the tt-ideal~$\cM=(0)$ of~$\Db(kG)$ on the left-hand side of~\eqref{eq:Spc-Db}, which corresponds to~$\Hm^+(G,k)$ on the right-hand side. In the language of projective algebraic geometry, $\Hm^+(G,k)$ is the `irrelevant' ideal and removing it yields the usual projective variety~$\Proj(\Hm^\sbull(G,k))$.
At the level of the derived category~$\Db(kG)$, the closed point~$\cM=(0)$ is exactly the support of the tt-ideal $\Dperf(kG)$ of perfect complexes. Removing this support, the open complement~$\Spc(\Db(kG))\sminus\{\cM\}$ becomes the spectrum of the corresponding localization $\Db(kG)/\Dperf(kG)$.
And the latter is the stable module category by~\eqref{eq:Rickard}.
Away from the `irrelevant' closed points, the homeomorphism~\eqref{eq:Spc-Db} restricts to
\begin{equation}
\label{eq:Spc-stmod}%
\Spc(\stmod(kG))\isoto \Proj(\Hm^\sbull(G,k)).
\end{equation}
In summary, the stable module category~$\stmod(kG)$ is obtained by a localization of the usual derived category $\Db(kG)$ away from the irrelevant ideal, \ie away from the unique closed point of its spectrum.
\end{Rec}

A similar pattern holds for the stable permutation category, with `local' replaced by `semi-local'. Let us recall some tt-geometry from~\cite{tt-perm}.
\begin{Rec}
\label{Rec:tt-perm}%
The main result of~\cite[\S\,7]{tt-perm} identified the spectrum of the tt-category $\cK(G)=\DPerm(G;k)^c$ of \Cref{Rec:perm}.
This spectrum admits a stratification by locally closed subsets indexed by conjugacy classes of $p$-subgroups
\begin{equation}
\label{eq:spcK-strata}%
\Spc(\cK(G))=\coprod_{(H)\in\Sub_p(G)/G}\cV_{\WGH}
\end{equation}
where $\cV_{\WGH}\cong\Spech(\Hm^*(\WGH;k))\cong\Spc(\Db(k(\WGH)))$ is the (extended) cohomological support variety associated with the Weyl group~$\WGH$, also known as the cohomological open in~$\Spc(\cK(\WGH))$. More precisely, for each $p$-subgroup~$H\le G$, the tt-functor~$\check\Psi^H\colon \cK(G)\to \Db(k(\WGH))$ of \Cref{Rec:modular-fixed-pts} induces a continuous map $\check\psi^H=\Spc(\check\Psi^H)$ that we prove to be injective~$\check\psi^H\colon\Spc(\Db(k(\WGH)))\hookrightarrow \Spc(\cK(G))$ and whose image is our~$\cV_{\WGH}$ in~\eqref{eq:spcK-strata}.
In particular, the closed point~$(0)$ of each~$\Spc(\Db(k(\WGH)))$ as in \Cref{Rec:stmod} gives a closed point
\[
\cM(H)=\cM_G(H):=\check\psi^H(0)=(\check\Psi^H)\inv(0)=\Ker(\check\Psi^H\colon \cK(G)\to \Db(k(\WGH)))
\]
that belongs to the stratum~$\cV_{\WGH}$. We proved in~\cite[Corollary~7.31]{tt-perm} that these $\cM(H)$ are all the closed points of~$\Spc(\cK(G))$.
In other words, $\cK(G)$ is `semi-local' in the sense that its spectrum admits only finitely many closed points~$\cM(H)$, one for every $G$-conjugacy class of $p$-subgroups~$H\in\Sub_p(G)$.
\end{Rec}
\begin{Prop}
\label{Prop:supp(G-perf)}%
An object~$C\in \cK(G)=\DPerm(G;k)^c$ is \eqperf\ in the sense of \Cref{Def:G-perfect} if and only if its support is contained in the subset~$\SET{\cM(H)}{H\in\Sub_p(G)}$ of closed points of $\Spc(\cK(G))$.
\end{Prop}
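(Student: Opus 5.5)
The plan is to reduce everything to the stratification~\eqref{eq:spcK-strata} and the injections $\check\psi^H\colon\Spc(\Db(k(\WGH)))\hookrightarrow\Spc(\cK(G))$ of \Cref{Rec:tt-perm}. The guiding principle is that support is compatible with tt-functors: for any tt-functor $F$ and object $C$, we have $\supp(F(C))=\Spc(F)\inv(\supp(C))$. Since the strata $\cV_{\WGH}=\Img(\check\psi^H)$ cover $\Spc(\cK(G))$ disjointly as $(H)$ runs over $\Sub_p(G)/G$, a subset $S\subseteq\Spc(\cK(G))$ is determined by its pieces $S\cap\cV_{\WGH}$. Each piece corresponds bijectively, via the injective map $\check\psi^H$, to $(\check\psi^H)\inv(S)\subseteq\Spc(\Db(k(\WGH)))$.

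The first step is to fix a $p$-subgroup $H$ and compare $\supp(C)\cap\cV_{\WGH}$ with $\supp(\check\Psi^H(C))$. By the principle above, $(\check\psi^H)\inv(\supp C)=\supp_{\Db(k(\WGH))}(\check\Psi^H(C))$. Therefore $\supp(C)\cap\cV_{\WGH}$ is contained in $\{\cM(H)\}=\{\check\psi^H(0)\}$ if and only if $\supp(\check\Psi^H(C))\subseteq\{(0)\}$, where $(0)$ is the unique closed point of $\Spc(\Db(k(\WGH)))$.

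The second step is to identify, inside $\Db(k(\WGH))$, the objects supported on the closed point $(0)$ as exactly the perfect complexes. The recollection preceding~\eqref{eq:Spc-stmod} states that $\{(0)\}$ is the support of the tt-ideal $\Dperf(k(\WGH))$, so one direction is clear. For the converse, suppose $\supp(D)\subseteq\{(0)\}$. Then the image of $D$ in $\stmod(k(\WGH))$ has empty support, since $\Spc(\stmod)=\Spc(\Db)\sminus\{(0)\}$. Hence that image is $\otimes$-nilpotent, and by rigidity it is zero. Via Rickard's equivalence~\eqref{eq:Rickard} this means $D\in\Dperf(k(\WGH))$. Alternatively, one can invoke the classification of thick tensor ideals through supports in $\Db(k(\WGH))$, since $\Dperf$ is the radical tt-ideal supported on $\{(0)\}$.

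Finally, we assemble the pieces. $C$ is \eqperf\ if and only if $\check\Psi^H(C)$ is perfect for all $p$-subgroups $H$, which by the two steps holds if and only if $\supp(C)\cap\cV_{\WGH}\subseteq\{\cM(H)\}$ for all $H$. Since the strata cover the spectrum, this is equivalent to $\supp(C)\subseteq\SET{\cM(H)}{H\in\Sub_p(G)}$. Conjugate subgroups give the same point and the same condition, so it suffices to range over conjugacy classes.

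The main obstacle I expect is the second step, namely justifying that empty support in $\stmod$ forces vanishing. This needs either the rigidity/nilpotence argument or the Benson--Carlson--Rickard classification, and I would cite the latter to stay brief. The first step relies on the injectivity and image description of $\check\psi^H$ from~\cite{tt-perm}, which is taken as given.
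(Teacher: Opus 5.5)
Your proposal is correct and follows essentially the same route as the paper: pull supports back along $\check\psi^H$, use the stratification~\eqref{eq:spcK-strata} together with the injectivity of $\check\psi^H$, and identify the objects of $\Db(k(\WGH))$ supported on the closed point $(0)$ with the perfect complexes. The only difference is that you justify this last identification explicitly, via $\otimes$-nilpotence and rigidity in $\stmod(k(\WGH))$ or via Benson--Carlson--Rickard, whereas the paper simply quotes it from its recollections on $\Db(kG)$ and Rickard's equivalence~\eqref{eq:Rickard}.
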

\begin{proof}
As with any tt-functor the support of the image $\check\Psi^H(C)$ of an object~$C$ is the preimage $(\check\psi^H)\inv(\supp(C))$ of its support under the induced map~$\check\psi^H=\Spc(\check\Psi^H)$.
It follows from~\eqref{eq:spcK-strata} that~$C$ has support in~$\SET{\cM(H)}{H\in\Sub_p(G)}$ if and only if every $\check\Psi^H(C)\in\Db(k(\WGH))$ has support in the closed point~$(0)$ of~$\Spc(\Db(\WGH))$, which is equivalent to $\check\Psi^H(C)$ being perfect over~$\WGH$ by \Cref{Rec:stmod}.
\end{proof}

\begin{Rem}
\label{Rem:K|U}%
\Cref{Def:stperm} is an instance of a very general tt-construction. For any Thomason subset~$Y\subseteq \SpcK$ of the spectrum of a tt-category~$\cK$, we form the \emph{localization of~$\cK$ on the complement~$U=Y^\compl$} by localization-idempotent-completion
\[
\cK|_U:=(\cK/\cK_Y)^\natural
\]
We typically do this for $Y$ closed with quasi-compact open complement~$U$. This tt-category~$\cK|_U$ comes with a $\natural$-localization tt-functor $\cK\to \cK|_U$ that we sometimes refer to as restriction on~$U$.
It induces an embedding $\Spc(\cK|_U)\hookrightarrow\SpcK$ whose image is~$U$. In short, $\Spc(\cK|_{U})=U$, as it should be.

When $\cK=\cT^c$ is the compact part of a `big' tt-category~$\cT$, it follows by~\cite{Neeman92b} that $\cT|_U:=\cT/\Loc{\cK_Y}$ remains a `big' tt-category whose compact part is the idempotent-completion of the corresponding Verdier quotient of compacts~$\cK/\cK_Y$, in other words $(\cT|_U)^c=(\cT^c)|_U$.

We apply this to~$\cT=\DPerm(G;k)$ and~$\cK=\cT^c=\cK(G)$ as in \Cref{Rec:perm} and to~$U\subset\Spc(\cK(G))$ the open complement of the closed points. It is quasi-compact for~$\Spc(\cK(G))$ is noetherian by~\cite[Proposition~9.1]{tt-perm}. The restriction of~$\cT$ on~$U$ is~$\cT|_U=\StPerm(G;k)$, with compact part~$\cK(G)|_U=\stperm(G;k)$ by \Cref{Prop:supp(G-perf)}.
It follows that the spectrum of the stable permutation category is the open~$U$ obtained from the semi-local $\Spc(\cK(G))$ by `puncturing out' the finitely many closed points:
\end{Rem}
\begin{Cor}
\label{Cor:Spc(stperm)}%
The spectrum of $\stperm(G;k)$ is homeomorphic to the open subspace of~$\Spc(\cK(G))$ complement of the closed points~$\SET{\cM(H)}{H\in\Sub_p(G)}$.
\qed
\end{Cor}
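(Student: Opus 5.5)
The plan is to combine \Cref{Prop:supp(G-perf)} with the general localization of \Cref{Rem:K|U}. Set $Y:=\SET{\cM(H)}{H\in\Sub_p(G)}\subseteq\Spc(\cK(G))$, a finite set of closed points by \Cref{Rec:tt-perm}. Its complement $U$ is open, and it is quasi-compact because $\Spc(\cK(G))$ is noetherian by~\cite[Proposition~9.1]{tt-perm}. Hence $Y$ is a Thomason subset: a finite union of closed subsets with quasi-compact complement. In fact each closed point $\cM(H)$ is already the support of some object, since closed subsets of a noetherian spectrum are supports of objects.

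First, we identify the tt-ideal $\cK(G)_Y=\SET{C\in\cK(G)}{\supp(C)\subseteq Y}$ with $\Keqperf{G}$. This is exactly the content of \Cref{Prop:supp(G-perf)}. Consequently
\[
\stperm(G;k)=\big(\cK(G)/\Keqperf{G}\big)^\natural=\big(\cK(G)/\cK(G)_Y\big)^\natural=\cK(G)|_U .
\]

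Second, we invoke the general fact recalled in \Cref{Rem:K|U}: for a Thomason subset $Y$, the localization functor $\cK\to(\cK/\cK_Y)^\natural$ induces a homeomorphism of $\Spc((\cK/\cK_Y)^\natural)$ onto the subspace $\SET{\cP\in\SpcK}{\cK_Y\subseteq\cP}$. The spectrum is insensitive to idempotent completion, and the spectrum of a Verdier quotient is the subspace of primes containing the kernel. It remains to check that this subspace equals $U=Y^\compl$. If $\cP\in U$ and $\supp(C)\subseteq Y$, then $\cP\notin\supp(C)$, so $C\in\cP$; this gives $U$ into the subspace. Conversely, suppose $\cP\in Y$. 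Because $Y$ is Thomason, there is an object $C$ with $\supp(C)=\overline{\{\cP\}}=\{\cP\}\subseteq Y$, and then $C\notin\cP$ while $C\in\cK(G)_Y$. This is the standard classification argument.

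The only mildly delicate point is this last step: we must know that each closed point $\{\cM(H)\}$ is the support of an object. Noetherianity of $\Spc(\cK(G))$ supplies this. Beyond that, the corollary is a formal consequence of \Cref{Prop:supp(G-perf)}.
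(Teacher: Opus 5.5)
Your proposal is correct and follows essentially the paper's route. The paper also uses \Cref{Prop:supp(G-perf)} to identify $\Keqperf{G}$ with $\cK(G)_Y$, where $Y$ is the set of closed points, so that $\stperm(G;k)=\cK(G)|_U$ (with $U$ quasi-compact by noetherianity), and then cites the general fact $\Spc(\cK|_U)=U$ of \Cref{Rem:K|U}; you simply unpack that general fact via the spectrum of a Verdier quotient and the realization of each closed point as the support of an object.
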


\begin{Exa}
\label{Exa:decomp-cyclic}%
Let~$G=C_{p^n}$ for $n\ge1$. Write $1=H_n<\cdots<H_1<H_0=G$ for the subgroups of~$G$, with $H_i$ of index~$p^i$.
Then by~\cite[Proposition~8.3]{tt-perm} the spectrum of~$\cK(C_{p^n})$ is the following space with $2n+1$ points:
\begin{equation}
\label{eq:SpcK-cyclic}%
W^n=\qquad
\vcenter{\xymatrix@R=1em@C=.7em{
{\scriptstyle\gm_0\kern-1em}
& {\bullet} \ar@{-}@[Gray] '[rd] '[rr] '[drrr]
&&
{\bullet}
&{\kern-1em\scriptstyle\gm_1}
&&{\scriptstyle\gm_{n-1}\kern-1em}&\bullet&&\bullet&{\kern-1em\scriptstyle\gm_n}
\\
&{\scriptstyle\gp_1\kern-1em}& {\bullet}&&& {\cdots}& \bullet\ar@{}[r]|(.7){\scriptstyle\gp_{n-1}^{\vphantom{I}}}\ar@{-}@[Gray] '[ru] '[rr] '[rrru] &&\bullet&{\kern-1em}\scriptstyle\gp_{n}}}
\end{equation}
The spectrum of the derived category of any non-trivial cyclic $p$-group (\eg, the quotients~$G/H_i$ for~$i>0$) is a Sierpi\'nski space $\Spc(\Db(C_{p^i}))=\{\gp,\gm\}$ with $\gp$ open and~$\gm$ closed. The stratum~$\cV_{G/H_i}$ of~\eqref{eq:spcK-strata} is the pairs~$\{\gp_i, \gm_i\}$ of~\eqref{eq:SpcK-cyclic} for $i=1\ldots,n$ and the left-most stratum boils down to~$\cV_{G/G}=\{\gm_0\}$ for~$i=0$.
Here $\gm_i=\Ker(\check\Psi^{H_i})=\cM(H_i)$ is the closed point corresponding to the $p$-subgroup~$H_i$. Removing all those closed points leaves a finite and discrete subspace
\[
\Spc(\stperm(C_{p^n};k))=\{\gp_1\}\sqcup\ldots\sqcup\{\gp_n\}.
\]
which is the spectrum of the stable permutation category by \Cref{Cor:Spc(stperm)}.
By general tt-geometry~\cite{Balmer07}, the fact that the spectrum of~$\cK=\stperm(C_{p^n};k)$ is disconnected $\SpcK=U_1\sqcup \cdots \sqcup U_n$ forces the rigid idempotent-complete tt-category~$\cK$ to be the product $\cK\cong\cK|_{U_1}\times\cdots \times\cK|_{U_n}$ of its local categories on each~$U_i$. We shall compute those components in \Cref{Thm:cyclic}.
\end{Exa}

\begin{Exa}
\label{Exa:DPerm-V4}%
Let $G=V_4=C_2^{\times 2}$ be the Klein-four group and $p=2$.
The stratification~\eqref{eq:spcK-strata} consists of 5 strata: a singleton (for $H=V_4$), two Sierpinski spaces (for each $H<V_4$ of index~$2$), and an ``extended'' projective line~$\bar{\mathbb{P}}^1_{\!\!k}$, that is, a projective line with a unique closed point added on top (for $H=1$).
Removing all the closed points, \Cref{Cor:Spc(stperm)} tells us that $\Spc(\stperm(G;k))$ is a~$\mathbb{P}^1_{\!\!k}$ with three extra points (for the three strata~$\cV_{G/H}$ with~$G/H\simeq C_2$). We recover in this way a space that we displayed in~\cite[\S\,18]{tt-perm}:
\begin{figure}[H]
\centering
\includegraphics[scale=0.2]{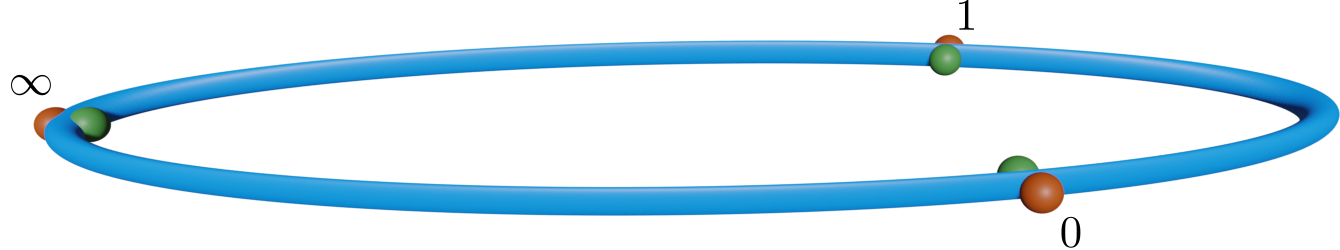}
\caption{Artist rendering of $\Spc(\stperm(V_4;k))$.}
\label{fig:P1-doubled}
\end{figure}
\end{Exa}

\begin{Rem}
More generally, for any elementary abelian $p$-group~$G$, the strata~$\cV_{G/H}$ in~\eqref{eq:spcK-strata} for $H\le G$ are extended projective spaces~$\bar{\mathbb{P}}^{n-1}_{\!\!k}$, where $n$ is the $p$-rank of~$G/H$, see~\cite[Example~8.6]{tt-perm}.
In fact, the space~$\Spc(\DPerm(G;k)^c)$ has a natural structure of a Dirac scheme, see~\cite[Corollary~15.6]{tt-perm}.
\end{Rem}

\begin{Rem}
The punctured spectrum
\begin{align*}
\Spc(\stmod(kG))\ &\subset\ \Spc(\Db(kG))
\intertext{
obtained by removing the closed point is precisely the \emph{periodic locus} of~$\Db(kG)$, namely the open subset of those points~$\cP\in\Spc(\Db(kG))$ such that the local category $\Db(kG)/\cP$ has a periodic suspension: $\Sigma^d\unit\simeq\unit$ for some $d\neq 0$.
Similarly, we expect that the punctured spectrum}
\Spc(\stperm(G;k))\ &\subset\ \Spc(\cK(G))
\end{align*}
obtained by removing the closed points is the periodic locus of~$\cK(G)=\DPerm(G;k)^c$.
This characterization of the stable permutation category~$\stperm(G;k)$ is established for $p$-groups in~\cite[Theorem~8.5]{gallauer:periods}.
For general finite groups it remains conjectural at this stage.
\end{Rem}

\section{General properties}
\label{sec:general}%

Recall that the assignment $G\mapsto \DPerm(G;k)$ comes with restriction functors along any group homomorphisms~$u\colon G\to G'$, that have an ambidextrous adjoint in case $u$ is injective, namely (co)induction. This data forms a Mackey 2-functor in the sense of~\cite{BalmerDellAmbrogio20}.
We claim that the 2-functor~$G\mapsto\StPerm(G;k)$ inherits a structure of Mackey 2-functor but only for restriction along injective homomorphisms.
The restriction and induction functors are compatible with those of~$\DPerm$ under the localizations $\DPerm(G;k)\onto \StPerm(G;k)$.
This is reminiscent of what happens with the passage from the usual derived category to the stable module category, see~\cite[Example~4.2.6]{BalmerDellAmbrogio20}.
In the language of~\cite{BalmerDellAmbrogio20}, this $G\mapsto \StPerm(G;k)$ is a Mackey 2-functor defined on the 2-category of finite groupoids and faithful 1-cells.
The analogous statements hold for the 2-functor $G\mapsto \stperm(G;k)$ of subcategories of compacts.

\begin{Prop}
\label{Prop:2-functoriality}%
The \eqperf\ complexes of $p$-permutation modules (\Cref{Def:G-perfect}) are preserved by conjugation by elements in~$G$, by restriction to subgroups and by induction from subgroups. They are also preserved by modular fixed-points functors with respect to every $p$-subgroup.
\end{Prop}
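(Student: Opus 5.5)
We work directly with \Cref{Def:G-perfect}: $C\in\cK(G)$ is \eqperf\ if and only if $\check\Psi^H(C)\in\Dperf(k(\WGH))$ for every $p$-subgroup $H\le G$. For each of the four operations (conjugation, restriction, induction, modular fixed points) we compute $\Psi^K$ of the transformed complex in terms of various $\Psi^H(C)$. We then reduce to the fact that perfect complexes over group algebras are stable under conjugation, restriction to subgroups, induction from subgroups, inflation-free operations such as finite direct sums, and the identification $\Upsilon\circ\Psi$ being compatible with these functors. All formulas are first checked on permutation modules $k(X)$, using $\Psi^H(kX)=k(X^H)$, and then extended to $p$-permutation modules and to complexes by additivity and degreewise application.

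\emph{Conjugation.} For $g\in G$ we have $\Psi^{{}^gH}({}^gC)\cong {}^g\Psi^H(C)$, compatibly with the isomorphism $\WGH\cong \Weyl{G}{{}^gH}$. Conjugation preserves perfect complexes, so this case is immediate.

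\emph{Restriction to $L\le G$.} For a $p$-subgroup $K\le L$ we have $N_L(K)/K\le \WGK$, and $\Psi^K\Res^G_L(C)\cong \Res^{\WGK}_{N_L(K)/K}\Psi^K(C)$; on $G$-sets this is just $(\Res X)^K=\Res(X^K)$. Restriction of a perfect complex to a subgroup is perfect, since free modules restrict to free modules.

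\emph{Induction from $L\le G$.} We need a Mackey-type formula. For a $p$-subgroup $K\le G$,
\[
\Psi^K(\Ind_L^G D)\cong\bigoplus_{g}\Ind_{N_{{}^gL}(K)/K}^{\WGK}\Psi^K({}^gD),
\]
where $g$ runs over representatives of $N_G(K)\backslash\{g\in G: K\le {}^gL\}/L$. On $G$-sets this is the decomposition of $(G\times_L Y)^K$ into $N_G(K)$-orbits, and we would verify it there and extend additively. Each summand is the induction of a perfect complex, because ${}^gD$ is \eqperf\ for ${}^gL$ by the conjugation case, applied at the subgroup $K\le {}^gL$. Induction preserves perfection since $\Ind$ of a free module is free. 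Hence the sum is perfect.

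I expect this formula to be the main obstacle. The delicate points are getting the right Weyl groups, namely $N_{{}^gL}(K)/K\le \WGK$, and making the isomorphism natural so that it passes to complexes. Alternatively, one could cite the corresponding formula from~\cite[\S\,5]{tt-perm}.

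\emph{Modular fixed points.} For a $p$-subgroup $H\le G$ and a $p$-subgroup $\bar K=K/H$ of $\WGH$ (so $H\normal K\le N_G(H)$), we use the composition formula $\Psi^{\bar K}\circ\Psi^H\cong \Res\circ\Psi^K$. Here the target Weyl group $N_{N_G(H)}(K)/K$ is a subgroup of $\WGK$. On sets this is $(X^H)^{K/H}=X^K$ with the restricted action. So $\Psi^{\bar K}\Psi^H(C)$ is a restriction of the perfect complex $\Psi^K(C)$, and it is therefore perfect.

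Since \eqperf\ complexes form a kernel, preservation by these exact functors also gives the induced functors on the Verdier quotients, which is what the Mackey structure needs.
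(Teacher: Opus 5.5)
Your treatment of conjugation, restriction and modular fixed points matches the paper. The paper simply invokes the compatibilities of \cite[Propositions~5.15 and~5.17]{tt-perm}, plus the fact that restriction to a subgroup preserves perfect complexes.

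For induction you take a genuinely different route. The paper argues tt-geometrically. By \Cref{Prop:supp(G-perf)}, a complex is \eqperf\ if and only if its support consists of closed points. The support of $\Ind_K^G(C)$ is the image of $\supp(C)$ under $\Spc(\Res^G_K)$, by the separable-extension result \cite[Theorem~3.4\,(c)]{Balmer16b}. Finally, $\Spc(\Res^G_K)$ sends closed points to closed points \cite[Lemma~11.9\,(b)]{tt-perm}. That argument requires no computation, but it relies on the full description of $\Spc(\cK(G))$ and its closed points.

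Your Mackey formula for $\Psi^K\circ\Ind_L^G$ is correct; it is the classical formula for the Brauer quotient of an induced module. It gives an elementary argument that is independent of the spectrum. It also yields explicit extra information, e.g.\ $\Psi^K(\Ind_L^G D)=0$ when $K$ is not subconjugate to~$L$.

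One caution: verifying the formula on $G$-sets and extending additively does not by itself give naturality with respect to arbitrary $kL$-linear maps between permutation modules. You need that naturality to apply the formula degreewise to complexes. The clean fix is to use that the Brauer quotient is a functor on all $kL$-modules and that your formula holds naturally for every $kL$-module~$M$:
\begin{itemize}
\item restricted to~$K$, the module $\Ind_L^G M$ splits along the $K$-orbits of~$G/L$;
\item the non-trivial orbits contribute modules induced from proper subgroups of~$K$, whose Brauer quotient at~$K$ vanishes;
\item each $K$-fixed coset $gL$ contributes the Brauer quotient of ${}^gM$ at~$K$, and $N_G(K)$ permutes these summands, producing your induced modules.
\end{itemize}
The same caveat applies to your other three formulas, but there the natural isomorphisms are already available in \cite[\S\,5]{tt-perm}.
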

\begin{proof}
For restriction $\Res^G_K$ to a subgroup~$K\le G$, we can use its compatibility with modular fixed points (\cite[Proposition~5.15]{tt-perm}): for every $p$-subgroup~$H\le K$ we have $\check\Psi^H\circ \Res^G_K\cong \Res^{\WGH}_{\Weyl{K}{H}}\circ \check\Psi^{H}$. Since $\Weyl{K}{H}$ is a subgroup of~$\WGH$, the restriction of a perfect complex over~$\WGH$ remains perfect over~$\Weyl{K}{H}$.
\cite[Proposition~5.15]{tt-perm} also deals with conjugation, in the same way.

For modular fixed-point, the argument is similar, using \cite[Proposition~5.17]{tt-perm} instead. Note that there is also a restriction to a subgroup involved in that case.

Finally, for induction, one can argue by means of supports, using \Cref{Prop:supp(G-perf)}. It suffices to check that, for every $K\le G$ and every $C\in\cK(K)$, the support of the induced complex $\Ind_K^G(C)$ is equal to the image of the support of~$C$ under~$\Spc(\Res^G_K)$. This is a general tt-fact about the image of a rigid object by a right adjoint to a separable extension~\cite[Theorem~3.4\,(c)]{Balmer16b}. It then suffices to use that $\Spc(\Res^G_K)$ maps closed points to closed points~\cite[Lemma~11.9\,(b)]{tt-perm}.
\end{proof}

\begin{Cor}
\label{Cor:2-functoriality}%
Let $H\le G$ be a subgroup. There is a well-defined coproduct-preserving tt-functor $\Res^G_H\colon \StPerm(G;k)\to \StPerm(H;k)$ compatible with the functor $\Res^G_H\colon \DPerm(G;k)\to \DPerm(H;k)$ under localization, meaning that the following diagram commutes up to isomorphism
\[
\xymatrix{
\DPerm(G;k) \ar@{->>}[d]_-{p} \ar[r]^-{\Res^G_H}
& \DPerm(H;k) \ar@{->>}[d]^-{p}
\\
\StPerm(G;k) \ar[r]^-{\Res^G_H}
& \StPerm(H;k).\!
}
\]

Restriction admits a two-sided adjoint $\Ind_H^G\colon \StPerm(H;k)\to \StPerm(G;k)$, compatible with $\Ind_H^G\colon \DPerm(H;k)\to \DPerm(G;k)$ under localization (as above).

Let $H\le G$ be a $p$-subgroup. There exists a tt-functor of `stable' modular $H$-fixed-points $\Psi^H\colon \StPerm(G;k)\to \StPerm(\WGH;k)$, compatible with the original~$\Psi^H\colon \DPerm(G;k)\to \DPerm(\WGH;k)$ under localization (as above again).

All the above functors preserve compact objects and give similar statements for~$\stperm$ instead of~$\StPerm$, \textsl{mutatis mutandis}.
\end{Cor}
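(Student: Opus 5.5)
The plan is to obtain every functor in the statement from the general principle for finite localizations. Suppose $F\colon\cT\to\cT'$ is a coproduct-preserving tt-functor between rigidly-compactly generated tt-categories that preserves compacts. Suppose also that $F$ sends a thick tensor ideal $\cJ\subseteq\cT^c$ into a thick tensor ideal $\cJ'\subseteq\cT'^c$. Then $F$ maps $\Loc{\cJ}$ into $\Loc{\cJ'}$, because $F$ is exact and preserves coproducts. Hence the composite $\cT\to\cT'\onto\cT'/\Loc{\cJ'}$ kills $\Loc{\cJ}$ and factors uniquely through the Verdier quotient $\cT/\Loc{\cJ}$. The induced functor is again a coproduct-preserving tt-functor, since the localization functors are and the monoidal structures on the quotients are induced. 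On compacts it gives $(\cT^c/\cJ)^\natural\to(\cT'^c/\cJ')^\natural$ via Neeman--Thomason localization (as in \Cref{Def:stperm}), which yields the $\stperm$ versions. I would apply this with $\cJ=\Keqperf{G}$ and $\cJ'=\Keqperf{H}$ (respectively $\Keqperf{\WGH}$). The required inclusions $F(\cJ)\subseteq\cJ'$ are exactly \Cref{Prop:2-functoriality}.

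First, restriction. $\Res^G_H\colon\DPerm(G;k)\to\DPerm(H;k)$ is a coproduct-preserving tt-functor preserving compacts, and by \Cref{Prop:2-functoriality} it maps $\Keqperf{G}$ into $\Keqperf{H}$. The principle above produces $\Res^G_H$ on $\StPerm$ together with the commutative square.

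Second, modular fixed points. For a $p$-subgroup $H$, the functor $\Psi^H\colon\DPerm(G;k)\to\DPerm(\WGH;k)$ is the coproduct-preserving extension of the degreewise tt-functor on compacts from \Cref{Rec:modular-fixed-pts}. \Cref{Prop:2-functoriality} gives $\Psi^H(\Keqperf{G})\subseteq\Keqperf{\WGH}$, so the same argument yields the stable $\Psi^H$.

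Third, induction. $\Ind_H^G\colon\DPerm(H;k)\to\DPerm(G;k)$ preserves coproducts and compacts and maps $\Keqperf{H}$ into $\Keqperf{G}$ by \Cref{Prop:2-functoriality}. So it descends to an exact coproduct-preserving functor $\overline{\Ind}$, which is not monoidal but need not be. The remaining task is to show that it is a two-sided adjoint of the descended $\overline{\Res}$. For this I would use that a finite localization $p\colon\cT\onto\cT/\Loc{\cJ}$ has a fully faithful right adjoint, whose image is the local objects $\Loc{\cJ}^{\perp}$. The descended functors then satisfy
\[
\overline{\Res}\cong p\circ\Res\circ p_\rho
\quad\text{and}\quad
\overline{\Ind}\cong p\circ\Ind\circ p_\rho,
\]
where $p_\rho$ denotes the right adjoint. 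To transport the adjunction $\Ind\dashv\Res$, it suffices that $\Res$ preserves local objects, that is, objects right-orthogonal to $\Loc{\cJ}$. This holds because $\Res$ has the left adjoint $\Ind$, and $\Ind$ maps $\Loc{\Keqperf{H}}$ into $\Loc{\Keqperf{G}}$. Symmetrically, $\Ind$ preserves local objects because its left adjoint $\Res$ maps $\Loc{\Keqperf{G}}$ into $\Loc{\Keqperf{H}}$. Combining these with the two adjunctions on $\DPerm$ gives both adjunctions $\overline{\Ind}\dashv\overline{\Res}\dashv\overline{\Ind}$.

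I expect this adjunction bookkeeping to be the only step needing care. The descent of the functors themselves is formal once \Cref{Prop:2-functoriality} is available. Preservation of compacts on the quotients follows from preservation on $\DPerm$ together with the identification $(\cT|_U)^c=(\cT^c)|_U$ from \Cref{Rem:K|U}.
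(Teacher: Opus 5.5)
Your proposal is correct and follows the paper's proof. Both descend each functor through the finite localization using that it preserves \eqperf\ complexes (\Cref{Prop:2-functoriality}), and both get compact-preservation from the original functors. The one difference is the adjunction step: the paper just notes that units and counits, hence the triangle identities, pass to the localization by its universal property, whereas you check the adjunctions via the fully faithful right adjoint and preservation of local objects, which is equally valid and rests on the same input.
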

\begin{proof}
Once the original functors preserve the tt-ideals of compacts that we quotient out (\eqperf\ complexes), these functors pass to the finite localizations of the `big' categories in a unique way. Natural transformations are preserved, and  therefore adjunctions follow. Since the original functors preserve compacts, so do the new ones on stable categories.
\end{proof}
\begin{Cor}
\label{Cor:2-Mackey}%
The 2-functors~$G\mapsto \StPerm(G;k)$ and~$G\mapsto \stperm(G;k)$ (with respect to restriction) are Mackey 2-functors on the 2-category of finite group(oid)s and \emph{faithful} 1-cells.
\end{Cor}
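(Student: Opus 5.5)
The plan is to transport the Mackey 2-functor structure of $G\mapsto\DPerm(G;k)$ along the localizations $p\colon\DPerm(G;k)\onto\StPerm(G;k)$, using that these localizations are compatible with every piece of structure involved. I recall from~\cite{BalmerDellAmbrogio20} that a Mackey 2-functor on groupoids and faithful 1-cells consists of the following data and conditions:
\begin{enumerate}[\rm(i)]
\item a strict or pseudo 2-functor given by restriction $u^*$ along faithful functors $u\colon H\to G$ of finite groupoids;
\item additivity, meaning that disjoint unions of groupoids are sent to products;
\item existence of two-sided adjoints $u_!\cong u_*$ for every faithful~$u$;
\item the Beck--Chevalley (Mackey) base-change isomorphisms for iso-comma squares.
\end{enumerate}
It is known that $\DPerm(-;k)$ and $\cK(-)$ satisfy all of these. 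I would first extend the definitions from groups to finite groupoids by taking products over connected components, since every finite groupoid is equivalent to a finite disjoint union of groups. Additivity (ii) then holds by construction, and the fact that it is compatible with equivalences of groupoids follows from conjugation invariance (\Cref{Prop:2-functoriality}).

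For (i), every faithful functor between finite groupoids is, up to equivalence, built from inclusions of subgroups, conjugations and coproducts. \Cref{Prop:2-functoriality} shows that restriction along these preserves the tt-ideals $\Keqperf{-}$, so each $u^*$ descends uniquely to the finite localizations, as in \Cref{Cor:2-functoriality}. The uniqueness in the universal property of the Verdier (or finite) localization $p$ then implies two things. First, the 2-cells of $\DPerm$ (including the pseudo-functoriality coherence isomorphisms and the action of natural transformations between faithful functors) descend to 2-cells between the induced functors. Second, all coherence axioms continue to hold, because they hold after precomposition with the essentially surjective functor~$p$; note that $p$ is in fact a localization, so whiskering with $p$ is faithful on natural transformations.

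For (iii), \Cref{Cor:2-functoriality} already gives the induction functors $\Ind$ on the quotients. The units and counits of both adjunctions descend along~$p$, and the triangle identities are checked after whiskering with~$p$, so $\Ind$ remains two-sided adjoint to restriction. For (iv), the Mackey formula isomorphism $u^*v_!\cong w_!x^*$ in $\DPerm$ is built from the units and counits, which are compatible with~$p$. It therefore induces a map between the descended functors, and this map is an isomorphism because it becomes one after whiskering with~$p$, which is a localization. The same arguments apply verbatim to the compact parts~$\stperm$, with the idempotent-completion being harmless because it is 2-functorial.

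The main obstacle is the bookkeeping of 2-cells under localization. Concretely, I need to verify that a natural transformation $\alpha\colon F\Rightarrow F'$ between functors that both preserve the ideal descends uniquely, and that whiskering with~$p$ reflects equalities and invertibility of 2-cells. For the big categories this follows from the 2-universal property of Bousfield/Verdier localization as a 2-categorical localization. I would isolate this as a small lemma and then apply it uniformly, in the same way that~\cite[Example~4.2.6]{BalmerDellAmbrogio20} handles the passage from derived categories to stable module categories. The restriction to \emph{faithful} 1-cells is essential: inflation along non-injective maps does not preserve \eqperf\ complexes. For example, inflating the projective $k$-module $k$ from $G/G$ gives $k$ over $G$, which is not \eqperf.
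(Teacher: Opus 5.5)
Your proposal is correct and is essentially the paper's argument. The paper likewise observes that all the Mackey axioms, in particular ambidexterity and the Mackey formula, are inherited from $\DPerm(-;k)$ because all units and counits descend along the localizations, provided one only restricts along faithful 1-cells as in \Cref{Cor:2-functoriality}; it leaves to the reader exactly the 2-cell bookkeeping you spell out. (Your closing inflation example needs $p$ to divide~$|G|$, but it plays no role in the proof.)
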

\begin{proof}
All properties, in particular Ambidexterity and the Mackey formula~\cite[Definition~1.1.7]{BalmerDellAmbrogio20} are inherited by localization since all units and counits come from the Mackey 2-functor~$\DPerm(-;k)$, as long as we only use restriction under faithful homomorphisms as in \Cref{Cor:2-functoriality}. We leave the details to the reader.
\end{proof}

\begin{Cor}
For every subgroup~$H\le G$ the category $\StPerm(H;k)$ is equivalent to the category of $A^G_H$-modules in~$\StPerm(G;k)$ where the tt-ring $A^G_H=\Ind_H^G(\unit)$ is~$k(G/H)$ with the usual multiplication coming from~$\perm(G;k)$ (making all~$\gamma\in G/H$ orthogonal idempotents). The same holds on subcategories of compacts.
\end{Cor}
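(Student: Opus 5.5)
The plan is to deduce the statement from the corresponding fact for the derived permutation category, namely that $\DPerm(H;k)$ is equivalent to the category of $A^G_H$-modules in $\DPerm(G;k)$. This is the separable monadicity of restriction, which holds for any Mackey 2-functor by Balmer–Dell'Ambrogio~\cite{BalmerDellAmbrogio20}. We then transport it through the finite localization $p$. We use \Cref{Cor:2-Mackey}: $G\mapsto\StPerm(G;k)$ is a Mackey 2-functor on faithful 1-cells, and $H\hookrightarrow G$ is faithful. Hence the general theorem of~\cite{BalmerDellAmbrogio20} applies directly to $\StPerm(-;k)$. It gives an equivalence $\StPerm(H;k)\simeq A\text{-}\Mod_{\StPerm(G;k)}$, where $A=\Ind_H^G\Res^G_H(\unit)=\Ind_H^G(\unit)$ carries the ring structure coming from the adjunction. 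In that general framework the key input is that the monad $\Res\circ\Ind$, equivalently $\Ind\circ\Res\cong A\otimes-$ via the projection formula, is separable, so that Eilenberg–Moore modules identify with $\StPerm(H;k)$. Here restriction is indeed a tt-functor and the projection formula holds by \Cref{Cor:2-functoriality}.

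Next we identify the ring $A$. By compatibility of $\Ind_H^G$ and of the units and counits with the localization $p$ (\Cref{Cor:2-functoriality}), we have $A\cong p(A^G_H)$, where $A^G_H=\Ind_H^G(\unit)=k(G/H)$ in $\DPerm(G;k)$ carries its ring structure from the Mackey structure. That structure is the one computed in~\cite{BalmerDellAmbrogio20} and~\cite{tt-perm}: $k(G/H)$ with multiplication making the basis elements $\gamma\in G/H$ orthogonal idempotents, that is, the permutation module of the $G$-set $G/H$ with multiplication induced by the diagonal $G/H\to G/H\times G/H$. Since $p$ is a tensor functor, $p(A^G_H)$ is a ring object with that multiplication.

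For compacts we argue as follows. $\Ind$ and $\Res$ preserve compacts, so $A$ is compact. The equivalence above restricts to compacts: an $A$-module $M$ is compact in $A\text{-}\Mod$ if and only if its underlying object is compact, since $A$ is compact and separable. This gives $\stperm(H;k)\simeq A\text{-}\Mod_{\stperm(G;k)}$. Alternatively one can invoke the general fact that module categories over a separable ring in an idempotent-complete tt-category are idempotent-complete; this is why $\stperm$ was defined with $\natural$.

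The main obstacle is making sure that the Mackey-2-functor input really applies after localization, in particular that the ambidexterity units and counits descend so that the monad identifies with $A\otimes-$. This is exactly what \Cref{Cor:2-Mackey} asserts, and we would cite it. If one prefers a direct argument, one can instead use~\cite{Balmer16b}: localization at a tt-ideal $\cJ$ commutes with taking modules over a separable ring $A$, giving $A\text{-}\Mod_{\cK/\cJ}\simeq(A\text{-}\Mod_{\cK})/(A\otimes\cJ)$. Together with $\Res(\Keqperf{G})$ generating $\Keqperf{H}$, this recovers the claim. That generation holds because $\Ind\Res$ sends $\Keqperf{H}$ back into its thick closure, using $\Keqperf{H}\subseteq\Res(\Ind(\Keqperf{H}))$ up to summands via separability.
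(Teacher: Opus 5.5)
Your argument is correct and is essentially the paper's proof. The paper also invokes general 2-Mackey theory \cite[Theorem~2.4.1]{BalmerDellAmbrogio20} for the Mackey 2-functor of \Cref{Cor:2-Mackey}, mentions the alternative of passing the $\DPerm$ statement through the localization, and reads off $A^G_H=\Ind_H^G(\unit)=k(G/H)$ from~$\DPerm(G;k)$. Your treatment of compacts, with compactness detected on underlying objects because $A$ is compact and separable, fills in the final claim fine. One small slip: the monad you want is $\Ind_H^G\circ\Res^G_H\cong A^G_H\otimes-$ on~$\StPerm(G;k)$, whereas $\Res^G_H\circ\Ind_H^G$ is a monad on~$\StPerm(H;k)$, so the two are not `equivalent' descriptions.
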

\begin{proof}
Again, this passes under localization or holds by general 2-Mackey theory~\cite[Theorem~2.4.1]{BalmerDellAmbrogio20}. One obtains~$\Ind_H^G(\unit)=k(G/H)$ from~$\DPerm$.
\end{proof}

\begin{Rem}
\label{Rem:stperm-cohomological}%
One can actually show that the Mackey 2-functor~$\StPerm(-;k)$ is \emph{cohomological} in the sense of~\cite{BalmerDellAmbrogio24}, meaning that for every subgroup~$H\le G$ the composite $\Id\to \Ind_H^G \Res^G \to \Id$, of the unit for the $\Res\adj\Ind$ adjunction with the counit for $\Ind\adj \Res$, is equal to multiplication by~$[G\!:\!H]$. Again, this is immediate from the fact that the original $\DPerm(-)$ is itself cohomological and from the fact that the units and counits pass to the localization.
\end{Rem}
\begin{Rem}
\label{Rem:descent}%
It follows from \Cref{Rem:stperm-cohomological} that restriction $\Res^G_H\colon \StPerm(G;k)\to \StPerm(H;k)$ is a faithful functor whenever the index~$[G\!:\!H]$ is prime to~$p$. Hence this restriction functor satisfies effective descent, in the classical sense. More precisely, the canonical Eilenberg-Moore functor $\cT=\StPerm(G;k)\to \Desc_{\cT}(A)$ to the descent category for~$A=A^G_H$ in~$\cT$ is an equivalence by~\cite[Corollary~3.1]{Balmer12}.
\end{Rem}

\begin{Rem}
In the same vein, one can prove that the Mackey 2-functor $\StPerm$ inherits from~$\DPerm$ the property of being a \emph{Green} 2-functor, in the sense of Dell'Ambrogio~\cite{DellAmbrogio22}.
This follows from the fact that $\DPerm\onto \StPerm$ is not only a quotient by a localizing subcategory but by a tensor-ideal.
\end{Rem}

We can adapt the Colimit Theorem from the case of the derived permutation category~\cite{tt-perm} to the stable permutation category.
\begin{Rec}
\label{Rec:EpG}%
The category $\mathcal{E}_{p}(G)$ of elementary abelian subquotients of~$G$ is defined as follows. Its objects are pairs $(H,K)$ where $K\normaleq H$ is a section of~$G$ and $H/K$ is elementary abelian.
A morphism $(H,K)\to (H',K')$ is an element~$g\in G$ such that $K'\le K^g$ and~$H^g\le H'$. By \cite[Construction~11.4]{tt-perm} each such morphism $g\colon (H,K)\to (H',K')$ yields a tt-functor
\begin{equation}\label{eq:K(g)}%
\cK(g)\colon \cK(H'/K')\xto{\Psi^{\bar K}}\cK(H'/K^g)\xto{\Res}\cK(H^g/K^g)\isoto \cK(H/K)
\end{equation}
combining modular fixed points with respect to the $p$-subgroup $\bar{K}:=K^g/K'\normal H'/K'$, restriction to the subgroup~$H^g/K^g\le H'/K^g$ and conjugation by~$g$.
Applying $\Spc$ gives a continuous map~$\Spc(\cK(H/K))\to \Spc(\cK(H'/K'))$ and the colimit of this diagram of topological spaces is~$\Spc(\cK(G))$ by~\cite[Theorem~11.10]{tt-perm}:
\begin{equation}
\label{eq:colim-K}%
\colim_{(H,K)\in\mathcal{E}_{p}(G)}\Spc(\cK(H/K))\isoto\Spc(\cK(G)).
\end{equation}
The comparison maps from each $\Spc(\cK(H/K))$ to~$\Spc(\cK(G))$ are similarly induced by the tt-functor $\cK(G)\xto{\Psi^K}\cK(\WGK)\xto{\Res}\cK(H/K)$.
\end{Rec}

We now prove the analogous statement for the stable permutation category.
\begin{Thm}
\label{Thm:colim-stperm}%
Modular fixed points and restrictions yield as above a homeomorphism of topological spaces
\begin{equation}
\label{eq:colim-stperm}%
\colim_{(H,K)\in\mathcal{E}_{p}(G)}\Spc(\stperm(H/K;k))\isoto\Spc(\stperm(G;k)).
\end{equation}
\end{Thm}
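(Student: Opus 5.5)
The plan is to deduce \eqref{eq:colim-stperm} from the known homeomorphism~\eqref{eq:colim-K} by removing closed points on both sides. By \Cref{Cor:Spc(stperm)}, each $\Spc(\stperm(H/K;k))$ is the open subspace $U_{H/K}\subseteq\Spc(\cK(H/K))$ obtained by deleting the closed points $\cM_{H/K}(L/K)$. The first step is to check that the diagram is well defined: for a morphism $g\colon(H,K)\to(H',K')$ in $\mathcal{E}_p(G)$, the tt-functor $\cK(g)$ of~\eqref{eq:K(g)} preserves \eqperf\ complexes by \Cref{Prop:2-functoriality}, since it is built from modular fixed points, restriction and conjugation. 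Hence it descends to $\stperm(H'/K';k)\to\stperm(H/K;k)$ as in \Cref{Cor:2-functoriality}, and the induced map on spectra is the restriction of $\Spc(\cK(g))$ to $U_{H/K}\to U_{H'/K'}$. The same applies to the comparison maps to~$\Spc(\cK(G))$. This yields a continuous map from the colimit in~\eqref{eq:colim-stperm} to $U_G=\Spc(\stperm(G;k))$.

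The key geometric input is a \emph{closed-point compatibility} statement for each comparison map $\varphi_{(H,K)}\colon\Spc(\cK(H/K))\to\Spc(\cK(G))$:
\[
\varphi_{(H,K)}\inv\big(\SET{\cM_G(L)}{L\in\Sub_p(G)}\big)=\SET{\cM_{H/K}(L/K)}{L/K\le H/K}.
\]
The inclusion $\supseteq$ says that closed points map to closed points. For restriction this is \cite[Lemma~11.9\,(b)]{tt-perm}, and for modular fixed points it follows from $\check\Psi^{L/K}\circ\Psi^K\cong\check\Psi^L$ up to restriction, using \cite[Proposition~5.17]{tt-perm}. The inclusion $\subseteq$ is equivalent, via \Cref{Prop:supp(G-perf)}, to the statement that the comparison functor sends objects \emph{not} \eqperf\ at a point of $U_{H/K}$ to something not supported only at closed points. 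I would prove it pointwise using the stratification~\eqref{eq:spcK-strata}. A non-closed point of $\Spc(\cK(H/K))$ lies in $\check\psi^{L/K}(\Spc\Db(k(\Weyl{H}{L})))$ minus the point~$(0)$. Its image is $\check\psi^L$ of the image of that point under $\Spc(\Res^{\WGH[L]}_{\Weyl{H}{L}})$, where $\WGH[L]$ denotes $N_G(L)/L$. Restriction to a subgroup on $\Spc\Db$, that is, on $\Spech$ of cohomology, sends the irrelevant ideal exactly to the irrelevant ideal, because $\Hm^\sbull(\WGH[L])\to\Hm^\sbull(\Weyl{H}{L})$ is finite (Evens--Venkov), so preimages of $\Hm^+$-primary points are irrelevant. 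Injectivity of $\check\psi^L$ together with the disjointness of the strata then gives $\subseteq$. This shows that the comparison maps restrict to the punctured spectra and that $\varphi_{(H,K)}\inv(U_G)=U_{H/K}$.

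With this in hand, the conclusion is formal topology. Let $X=\colim X_i\cong\Spc(\cK(G))$ with $U\subseteq X$ open and $U_i=\varphi_i\inv(U)$. Then the colimit of the $U_i$ maps bijectively onto~$U$, because every point of~$X$, and hence of~$U$, comes from some $X_i$ and identifications are preserved. It is also a homeomorphism, because open subsets of a colimit are detected by preimages, and the preimage of an open $V\subseteq U$ in the colimit is open in each $U_i$ exactly when $V$ is open in~$X$. One must take some care that the colimit of spaces is a quotient of a coproduct, and that restricting a quotient map to a saturated open subset is again a quotient map. This holds here because $\coprod U_i$ is the full preimage of~$U$. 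I expect the main obstacle to be the $\subseteq$ direction of the closed-point compatibility, specifically tracking the stratum and the Weyl group through the composite $\Psi^K$ followed by restriction.
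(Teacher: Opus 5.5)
Your proposal is correct and is essentially the paper's proof: you restrict the homeomorphism~\eqref{eq:colim-K} to the open complement of the closed points, after checking that preimages of closed points are exactly the closed points, with $\supseteq$ coming from \cite[Lemma~11.9\,(b)]{tt-perm}. Your pointwise stratification/Evens--Venkov argument for $\subseteq$ is valid but unnecessary, since the paper obtains that inclusion directly from \Cref{Prop:supp(G-perf),Prop:2-functoriality}, which you already used in your first step (the functors descending to $\stperm$ says precisely that the maps send $U_{H/K}$ into $U_{H'/K'}$), so the step you flag as the main obstacle is already done --- and even pointwise, finiteness of cohomology is overkill, because a non-closed prime~$\gp$ of $\Db$ contains $\Dperf$, so $\Res\inv(\gp)$ contains the nonzero free module.
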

\begin{proof}
In view of \Cref{Cor:Spc(stperm)}, it suffices to verify that for each of the various functors $f^*$ discussed above (namely modular fixed points, restriction and conjugation) with induced map $f=\Spc(f^*)$ on spectra, we have $f^{-1}(\{\text{closed points}\})=\{\text{closed points}\}$.
For then the homeomorphism~\eqref{eq:colim-stperm} is the restriction of the homeomorphism~\eqref{eq:colim-K} to the open complement of the closed points.

The inclusion $\subseteq$ follows from \Cref{Prop:supp(G-perf),Prop:2-functoriality}.
The converse inclusion is~\cite[Lemma~11.9\,(b)]{tt-perm}.
\end{proof}

\section{Bottleneck subgroups}
\label{sec:bottleneck}%

In this short preparatory section, $P$ is a $p$-group.
We discuss when $P$ admits a `bottleneck' subgroup~$H$ by which we mean a proper non-trivial subgroup~$1\neq H<P$ that cannot be surrounded by a section $K'<H<H'$ with $K'\normaleq H'$ and~$H'/K'$ elementary abelian of $p$-rank at least two. In other words, we discuss when every subgroup~$H'$ containing~$H$ as a subgroup of index~$p$ has only $H$ as index-$p$ subgroup.

Bottleneck subgroups can exist at the top and at the bottom of a $p$-group:
\begin{Rem}
\label{Rem:unique-index-p}%
It is well-known that a $p$-group that contains a \emph{unique index-$p$ subgroup}~$H$ must be cyclic. Indeed, $H$ must contain any central element of order~$p$ and, modding it out, one concludes by induction on the order of the group.
\end{Rem}
\begin{Rem}
\label{Rem:quaternion}%
At the other end, a $p$-group that has a \emph{unique subgroup of order~$p$} must be either a cyclic group or a generalized quaternion group $Q_{2^n}$ of order~$2^n$.
This is more involved, see~\cite[Theorem~4.3, p.~99]{brown:cohomology-groups}.
Recall that for $n\ge3$, the \emph{generalized quaternion group} is presented as follows
\begin{equation}
\label{eq:Q2n}%
Q_{2^n}=\langle x,y\mid x^{2^{n-1}}=1,\ y^2=x^{2^{n-2}}, y x y\inv=x\inv \rangle.
\end{equation}
Its unique subgroup of order~2 is the center~$Z(Q_{2^n})=\langle y^2\rangle$.

Note that in those examples, both in the cyclic case and in the generalized quaternion case, the unique subgroup of order~$p$ is central.
\end{Rem}

We are going to show that these are the only examples.
\begin{Lem}
\label{Lem:bottleneck}%
Let $P$ be a $p$-group that contains a non-trivial proper subgroup~$H$ with the following property: For every subgroup $H'\le P$ containing~$H$ with~$[H'\!:\!H]=p$, the group~$H'$ is cyclic. Then $P$ is cyclic or generalized quaternion.
\end{Lem}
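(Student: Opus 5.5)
The plan is to reduce to \Cref{Rem:quaternion}: it suffices to show that $P$ has a unique subgroup of order~$p$. First, two easy observations. Since $H<P$ is proper and $P$ is a $p$-group, the normalizer grows: $N_P(H)>H$. So $N_P(H)/H$ contains a subgroup of order~$p$, giving some $H'$ with $H<H'$ and $[H'\!:\!H]=p$. By hypothesis $H'$ is cyclic, hence so is $H$. Moreover, the subgroups $H'\ge H$ of index~$p$ are exactly the preimages of the order-$p$ subgroups of $N_P(H)/H$. We argue by induction on $[P\!:\!H]$. If $[P\!:\!H]=p$ then $P=H'$ is cyclic and we are done.

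For the inductive step, with $[P\!:\!H]>p$, fix a cyclic $H'=\langle x\rangle\ge H$ of index~$p$ over~$H$, so $H=\langle x^p\rangle$ and $|H'|\ge p^2$ because $H\neq1$. The $H'$ is a non-trivial proper subgroup of~$P$. We want to check that $H'$ again satisfies the hypothesis of the lemma, so that induction applies. Let $H''\ge H'$ with $[H''\!:\!H']=p$ and suppose $H''$ is not cyclic. Then $H''$ is a non-cyclic $p$-group with a cyclic maximal subgroup, so by the classical classification it is one of
\[
C_{p^{m}}\times C_p,\qquad M_{p^{m+1}},\qquad D_{2^{m+1}},\qquad SD_{2^{m+1}},\qquad Q_{2^{m+1}}.
\]
Since $H=\langle x^p\rangle$ is characteristic in~$H'$, it is normal in~$H''$. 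In each case we exhibit a suitable $y\in H''\sminus H'$ such that $K=\langle x^p,y\rangle$ has index~$p$ over~$H$ and is non-cyclic, contradicting the hypothesis:
\begin{enumerate}[\rm(a)]
\item for $C_{p^m}\times C_p$ and $M_{p^{m+1}}$, take $y$ of order~$p$; then $K$ is abelian and non-cyclic;
\item for $D$ and $SD$, take $y$ a non-central involution; then $K$ is dihedral;
\item for $Q_{2^{m+1}}$ with $m\ge3$, the group $K$ is generalized quaternion.
\end{enumerate}
The single surviving exception is $H''\cong Q_8$ with $|H|=2$. Outside this case, $H'$ satisfies the hypothesis and induction concludes.

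The main obstacle is exactly this exceptional case, where the induction breaks. There I would argue directly rather than inductively. Write $H=\langle z\rangle$ of order~$2$. Any involution $t\ne z$ in $C_P(z)$ would give $\langle z,t\rangle\cong C_2^2$, a non-cyclic subgroup of index~$2$ over~$H$, which is excluded. So $z$ is the unique involution of $C_P(z)$. Now $Z(P)$ contains an involution, and it lies in $C_P(z)$, so that involution is~$z$. Hence $z$ is central and $C_P(z)=P$. Therefore $P$ has the unique involution~$z$, and \Cref{Rem:quaternion} gives that $P$ is cyclic or generalized quaternion.

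The same centralizer trick could plausibly handle the general case uniformly. Let $Z$ be the order-$p$ subgroup of the cyclic group~$H$, and compare it with a central subgroup of order~$p$ in $N_P(H)$. I would try this first. However, it only controls elements normalizing~$H$, which is why I fall back on the classification-based induction above.
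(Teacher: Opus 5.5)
Your proof is correct, but it takes a different route from the paper's.

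\textbf{The paper's route.} The paper inducts on $|P|$ and goes \emph{down}. It first observes that any central $z\in Z(P)$ of order~$p$ must lie in~$H$; otherwise $\langle H,z\rangle\cong H\times C_p$ would be a non-cyclic overgroup of index~$p$. It then splits into two cases.
\begin{enumerate}[\rm(i)]
\item If $H=\langle z\rangle$, the argument is essentially your exceptional-case argument: an element $g$ of order~$p$ outside~$H$ would give $\langle z,g\rangle\cong C_p\times C_p$.
\item If $H>\langle z\rangle$, then $H/\langle z\rangle$ inherits the hypothesis in $P/\langle z\rangle$. Induction says the unique order-$p$ subgroup of $P/\langle z\rangle$ is central, hence lies in $H/\langle z\rangle$. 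So every element of order~$p$ of~$P$ lies in the cyclic group~$H$.
\end{enumerate}

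\textbf{Your route.} You instead induct on $[P\!:\!H]$ and climb \emph{up} to a cyclic $H'$. You must then show that $H'$ inherits the hypothesis. That is where you need Burnside's classification of $p$-groups with a cyclic maximal subgroup and a check over five families. Your case checks are right. One point in the $M_{p^{m+1}}$ case should be stated explicitly: $K=\langle x^p,y\rangle$ is abelian because all $p$-th powers in $M_{p^{m+1}}$ lie in its center. The only obstruction is $H''\cong Q_8$ with $|H|=2$, which your centralizer argument handles correctly.

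\textbf{Comparison.} Quotienting by a central $\langle z\rangle\le H$ is exactly the idea that lets the paper avoid the classification. Its only external input is \Cref{Rem:quaternion}, which you use as well. Your approach gives a slightly finer picture: climbing through cyclic overgroups fails only at a $Q_8$ step. The cost is a heavier, less self-contained argument.
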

\begin{proof}
By induction, we can assume the result for every $p$-group of order~$|P|/p$.
Recall that for every proper subgroup~$H$ of a $p$-group~$P$, there exists $H<H'\le P$ with $[H'\!:\!H]=p$. It follows from our assumption ($H'$ being cyclic) that $H$ is cyclic.

\smallbreak
\textit{Observation:
Let $z\in Z(P)$ be a central element of order~$p$. Then $z\in H$.}
\smallbreak

Indeed, if $z$ was not in~$H$ then the subgroup $H'=\langle H,z\rangle\simeq H\times C_p$ would contradict our hypothesis. Let us pick such a~$z\in Z(P)\cap H$.

We distinguish two cases. First, suppose that $H=\langle z\rangle$. Then $H\le Z(P)$ is central. For every element $g\in P$ of order~$p$, the subgroup~$H':=\langle z,g\rangle$ cannot be bigger than~$H$ otherwise it would be isomorphic to~$C_p\times C_p$ and contradict our hypothesis again. It follows that $g\in H$. Consequently, $H$ is the only subgroup of~$P$ of order~$p$. By \Cref{Rem:quaternion} we obtain that $P$ is cyclic or generalized quaternion.

The second option is that $H$ is strictly larger than the central cyclic subgroup~$\langle z\rangle$.
Consider then the quotient~$\bar{P}=P/\langle z\rangle$ and $\bar{H}=H/\langle z\rangle\lneq \bar P$.
One easily verifies that $\bar H$ still satisfies the same hypothesis in~$\bar P$ as~$H$ did in~$P$.
By induction hypothesis, we know that $\bar P$ is either cyclic or generalized quaternion, and in particular admits a unique subgroup of order~$p$, which is central and therefore necessarily in~$\bar{H}$ by the above Observation (applied to~$\bar{P}$ and~$\bar{H}$).

Let $g\in P$ be an element of order~$p$. We claim that $g\in H$. Its class~$\bar{g}\in\bar{P}$ modulo~$\langle z\rangle$ has order~$1$ or~$p$ hence belongs to~$\bar{H}$ by the above discussion. It follows from this and from~$\ideal{z}\le H$ that $g\in H$ as claimed.

Consequently, $H$ contains every subgroup of~$P$ of order~$p$ and since the cyclic group $H$ has a unique subgroup of order~$p$ the same holds for~$P$.
We conclude that $P$ must be cyclic or generalized quaternion by \Cref{Rem:quaternion}.
\end{proof}

We shall use the above result in the following form:

\begin{Prop}
\label{Prop:bottleneck}%
Let $P$ be a $p$-group that is neither cyclic nor generalized quaternion and let $1\neq H<P$ be a proper non-trivial subgroup.
Then there exist a section~$K'\normaleq H'\le P$ and proper inclusions $K'<H<H'$, with $H'/K'$ elementary abelian (necessarily of $p$-rank at least two).
\end{Prop}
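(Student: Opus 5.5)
This follows from \Cref{Lem:bottleneck} by contraposition, with one group-theoretic step to produce the section. Since $P$ is neither cyclic nor generalized quaternion and $1\neq H<P$ is proper and non-trivial, the hypothesis of \Cref{Lem:bottleneck} must fail for~$H$. So there exists a subgroup $H'\le P$ with $H\le H'$, $[H'\!:\!H]=p$, and $H'$ \emph{not} cyclic. This~$H'$ will be the top of our section. The inclusion $H<H'$ is proper by construction.

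Next we find $K'$. Consider the Frattini subgroup $\Phi(H')$, which is normal (indeed characteristic) in~$H'$, with $H'/\Phi(H')$ elementary abelian. Its rank $d(H')$ is the minimal number of generators of~$H'$. Since $H'$ is a non-cyclic $p$-group, Burnside's basis theorem gives $d(H')\ge2$. Moreover $H$ has index~$p$ in the $p$-group~$H'$, so $H$ is a maximal subgroup and hence $\Phi(H')\le H$.

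Set $K'=\Phi(H')$. Then $K'\normaleq H'$ and $H'/K'$ is elementary abelian of rank at least two. It remains to see that $K'<H$ is proper. If $K'=H$, then $H'/K'$ would have order~$p$, i.e.\ rank one, contradicting $d(H')\ge2$. Hence $K'<H<H'$, as required. The parenthetical claim about the rank is automatic: if $H'/K'$ is elementary abelian and contains the proper non-trivial subgroup $H/K'$, it cannot be cyclic of order~$p$.

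The only potential obstacle is making sure the Frattini quotient is the right choice. It is, because maximal subgroups of $p$-groups contain $\Phi$ and have index~$p$. Everything else is bookkeeping from \Cref{Lem:bottleneck}.
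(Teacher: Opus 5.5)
Your proof is correct and is essentially the paper's argument: the paper also applies \Cref{Lem:bottleneck} to obtain a non-cyclic $H'>H$ with $[H'\!:\!H]=p$, and takes $K'$ to be the Frattini subgroup of~$H'$. The only difference is in justifying $K'\neq H$: you use Burnside's basis theorem, while the paper uses \Cref{Rem:unique-index-p} (a non-cyclic $p$-group has more than one index-$p$ subgroup), and these amount to the same fact.
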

\begin{proof}
By \Cref{Lem:bottleneck} there exists a \emph{non-cyclic} subgroup~$H'>H$ with $[H'\!:\!H]=p$. Since $H'$ is not cyclic, it has more than one subgroup of index~$p$ (\Cref{Rem:unique-index-p}). We get the result with $K'=\bigcap_{[H'\,:\,N]=p}N$ the Frattini subgroup of~$H'$.
\end{proof}


\section{Indecomposability}
\label{sec:indecomposability}%

We can apply \Cref{Thm:colim-stperm} and \Cref{Prop:bottleneck} to prove a first result about the stable permutation category, namely deciding when it decomposes.

\begin{Thm}
\label{Thm:indecomposability}%
Let $G$ be a group, whose $p$-Sylow is neither cyclic, nor generalized quaternion (for $p=2$). Then $\stperm(G;k)$ is indecomposable as a tensor-triangulated category, that is, its spectrum is connected.
\end{Thm}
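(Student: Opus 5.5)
The plan is to use the Colimit Theorem~\ref{Thm:colim-stperm} to reduce connectedness of $\Spc(\stperm(G;k))$ to the elementary abelian pieces $\Spc(\stperm(E;k))$ with $E=H/K$, and to use \Cref{Prop:bottleneck} to show that these pieces are glued together into a single connected space. By \Cref{Rem:descent} (or directly from the colimit, since every elementary abelian subquotient is conjugate into a $p$-Sylow~$P$), the comparison map from the $P$-colimit is surjective. So it suffices to show that $\Spc(\stperm(P;k))$ is connected, where $P$ is a $p$-Sylow; the image of a connected space under a continuous map is connected. We may assume $P\neq1$ and $P$ not cyclic, so $P$ has $p$-rank at least two; otherwise $\stperm=0$ or we are in the excluded cases.

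\textbf{Step 1: the connected pieces.} For $E$ elementary abelian of rank $r$, \Cref{Cor:Spc(stperm)} and~\eqref{eq:spcK-strata} give $\Spc(\stperm(E;k))$ as the union over $K\le E$ of the punctured strata $\cV_{E/K}\sminus\{\cM(K)\}\cong\mathbb{P}^{r_K-1}$, where $r_K$ is the rank of $E/K$ and $K\neq E$. The stratum with $K=1$ is a projective space $\mathbb{P}^{r-1}_k$, which is connected when $r\ge2$. Each other stratum $\cV_{E/K}$, for $K\ne1$, specializes into it: the closure of the generic point of $\cV_{E/K}$ in $\Spc(\cK(E))$ meets $\cV_{E}$ at a non-closed point, as one sees in \Cref{Exa:DPerm-V4} and in~\cite[\S8]{tt-perm}. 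Concretely, in $\Spc(\cK(E))$ the closure of $\gp$ in a stratum of $E/K$ contains points of the $\cV_{E/K'}$ for $K'<K$. I would verify this via the known description of the spectrum for elementary abelian groups, where the closure relation is governed by the twisted cohomology / Koszul objects. It should suffice to check, for every $K\ne1$ with $E/K\ne 1$, that the closure of $\cV_{E/K}\sminus\{\cM(K)\}$ meets some stratum with smaller $K$ away from the closed points. So for $r\ge2$ the space $\Spc(\stperm(E;k))$ is connected. For $r=1$ it is the single point $\gp_1$.

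\textbf{Step 2: gluing over $\mathcal{E}_p(P)$.} A colimit of connected spaces is connected provided the indexing diagram is connected after discarding empty pieces. The pieces are empty only for $H=K$. The rank-one pieces $(H,K)$ with $H/K\cong C_p$ are single points. \Cref{Prop:bottleneck} shows that each such point is contained in the image of a piece of rank at least two: given $K<H$ of index $p$ with $K\ne1$ or $H\ne P$, it supplies $K'<\ldots<H'$ with $H'/K'$ of rank at least two, mapping to $(H,K)$-type data. I would apply it to the middle subgroup appropriately, namely $H$ itself when $K\ne1$, or after reindexing. This requires checking how \Cref{Prop:bottleneck} is applied when one of $K,H$ is trivial or equal to $P$; those boundary cases are handled using $\mathrm{rank}(P)\ge2$ directly. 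It then remains to connect all pieces of rank at least two to each other. Every such piece $(H,K)$ contains the rank-one subquotients $(H,L)$ and $(L',K)$ with $K<L'<H$. These let me walk along chains of subgroups $1=K_0<K_1<\cdots<P$, with each consecutive index-$p$ pair joined through a rank-two section supplied by \Cref{Prop:bottleneck}. Since any two subgroups are connected through chains passing via $1$ and $P$, the whole diagram becomes connected.

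\textbf{Main obstacle.} The hardest step is making Step~2 precise: identifying exactly which points of the colimit coincide. In particular one must show that the images of the point $\gp$ of $(H,K)$ and of the corresponding point in a larger $(H',K')$ agree under the maps $\cK(g)$. This needs the explicit description of the maps from \Cref{Rec:EpG} and \cite[\S11]{tt-perm}. I would first try the case where the middle subgroup is a bottleneck candidate, since that is exactly where \Cref{Prop:bottleneck} is needed; the cyclic and quaternion cases are precisely those where it fails and disconnection occurs.
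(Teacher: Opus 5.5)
Your architecture matches the paper's: reduce to a Sylow $P$, invoke \Cref{Thm:colim-stperm}, show each piece $\Spc(\stperm(E;k))$ is connected, and glue using \Cref{Prop:bottleneck}. However, both substantive steps are left open, and Step~1 rests on a false claim.

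For $1\neq K\le E$, the subset $\Img(\Spc(\Psi^K))$ is closed \cite[Proposition~7.18]{tt-perm}. It contains $\cV_{E/K}$ and consists only of strata $\cV_{E/K'}$ with $K'\ge K$. So the closure of a point of $\cV_{E/K}$ never meets $\cV_{E/1}$, nor any $\cV_{E/K'}$ with $K'<K$, and the check you propose would fail. Specialization runs the other way. $\Spc(\cK(E))$ is irreducible \cite[Proposition~15.11]{tt-perm}, with generic point necessarily in $\cV_{E/1}$. The closed points have height equal to the rank \cite[Remark~15.15]{tt-perm}, so this generic point survives the puncturing. Hence $\Spc(\stperm(E;k))$ is irreducible, and thus connected, for every non-trivial $E$, rank one included. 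That is the input you need; no stratum-by-stratum analysis is required.

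In Step~2, the \emph{main obstacle} you name is not one. A morphism $(H,K)\to(H',K')$ in $\mathcal{E}_p(P)$ forces the image of the $(H,K)$-piece in $\Spc(\stperm(P;k))$ to lie inside the image of the $(H',K')$-piece. This follows from the cocone structure alone, with no point-tracking through $\cK(g)$.

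The real difficulty is combinatorial, and there the argument is missing. \Cref{Prop:bottleneck} yields $K'<H<H'$ but not $K'\le K$, so it does not by itself place a rank-one section $(H,K)$ inside a rank-$\ge 2$ one. Nothing you write links $(K_{i+1},K_i)$ to $(K_{i+2},K_{i+1})$ when $K_{i+2}/K_i\cong C_{p^2}$, nor one chain to another.

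The missing idea is to anchor at the top and induct on $[P:H]$. Let $A$ be the connected component containing the image of $(P,\Phi(P))$. It contains every $(P,K)$-image, since $\Phi(P)\le K$. For $(H,K)$ with $1\neq H<P$, take $(H',K')$ from \Cref{Prop:bottleneck} and set $F'=\Phi(H)$. Then $F'\le K\cap K'$, and there is a zig-zag
\[
(H,K)\to(H,F')\leftarrow(H,K')\to(H',K')
\]
whose middle piece is non-empty because $K'<H$. So the connected images of $(H,F')$ and $(H',K')$ overlap and together contain the image of $(H,K)$. The induction hypothesis for $H'>H$ then puts this image in~$A$.
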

\begin{proof}
The fact that $\Spc(\cK)$ disconnected implies $\cK\simeq \cK_1\times \cK_2$ when $\cK$ is rigid is direct from~\cite[Theorem~2.11]{Balmer07}.
Let $P$ be a $p$-Sylow of~$G$. Since the tt-functor $\Res^G_P\colon \stperm(G;k)\to \stperm(P;k)$ is faithful by \Cref{Rem:stperm-cohomological}, the continuous map $\Spc(\Res^G_P)$ is surjective (\cite{Balmer18a}) and we can therefore reduce the theorem to the case where $G=P$ is a $p$-group.

By \Cref{Thm:colim-stperm}, the space $\Spc(\stperm(G))$ is covered by the images of the spaces $\Spc(\stperm(H/K))$ for $(H,K)\in\mathcal{E}_{p}(G)$ in the category of elementary abelian subquotients of~$G$ recalled in~\Cref{Rec:EpG}. These spaces are non-empty as long as~$H/K$ is non-trivial.

We know from \cite[Proposition~15.11]{tt-perm} that for every non-trivial elementary abelian $p$-group~$E$ (for instance any~$H/K$ above) the spectrum $\Spc(\Kb(\perm(E;k))$ is irreducible: it has a generic point.
The closed points in~$\Spc(\Kb(\perm(E;k))$ are of height equal to the $p$-rank~\cite[Remark~15.15]{tt-perm}. Hence, after removing them, the open $\Spc(\stperm(E;k))$ remains irreducible and in particular connected. It follows that any continuous image of that space $\Spc(\stperm(E;k))$ remains connected.
We are going to see that all those images inside~$\Spc(\stperm(G;k))$ are connected by showing they overlap gradually.

Let us look at the `top' elementary abelian section, namely~$(G,F)$ where $F\normal G$ is the Frattini subgroup of our~$p$-group~$G$.
Let us denote by $A\subseteq \Spc(\stperm(G;k))$ the connected component that contains the image of $\Spc(\stperm(G/F;k))$.
We want to show that $A$ contains every other image of~$\Spc(\stperm(H/K;k))$ for every other non-trivial section~$(H,K)\in\mathcal{E}_p(G)$.
We proceed by induction on~$[G\!:H]$. As long as $H=G$, the image of $\Spc(\stperm(H/K;k))$ is contained in~$A$ since the Frattini is the smallest: $F\le K$ gives $\Img(\Spc(\Psi^K))\subseteq\Img(\Spc(\Psi^F))$. So we can assume $[G\!:\!H]>1$ and that the image of every $(H',K')$ with $[G\!:\!H']<[G\!:\!H]$ is contained in~$A$.

Let $(H,K)\in\mathcal{E}_p(G)$ with $H/K$ non-trivial and $H\lneq G$ proper.
By \Cref{Prop:bottleneck} there exists $(H',K')\in\mathcal{E}_p(G)$ with strict inclusions~$K'<H<H'$.
Let $F'<H$ be the Frattini subgroup of~$H$.
We have the following left-hand side diagram of subgroups of~$G$, in which a line indicates a normal subgroup with elementary abelian quotient:
%
\[
\vcenter{\xymatrix@R=.5em{
&& H' \ar@{-}[ld]_-{} \ar@{-}[dd]^-{}
\\
& H \ar@{-}[ld]_-{} \ar@{-}[rd]^-{}  \ar@{-}[dd]^-{}
\\
K \ar@{-}[rd]_-{}
&& K' \ar@{-}[ld]_-{}
\\
& F'
}}
\kern 5em
\vcenter{\xymatrix@R=2em{
(H,K) \ar[r]
& (H,F')
\\
(H,K') \ar[ru] \ar[r]
& (H',K')
}}
\]
Those inclusions yield the morphisms (all given by the identity $1\in G$) in~$\mathcal{E}_{p}(G)$ displayed on the right-hand side above. Since all $E\in\{H/K,\ H/F',\ H/K',\ H'/K'\}$ are non-trivial, the spectra $\Spc(\stperm(E;k))$ are non-empty.
And since they are all connected, we conclude that the image of~$\Spc(\stperm(H/K;k))$ for the section~$(H,K)$ belongs to the same connected component as that of~$(H',K')$. Since $H'$ is larger than~$H$, we conclude by induction hypothesis that this connected component is indeed~$A$.
\end{proof}

\section{Decomposition over cyclic and generalized quaternion}
\label{sec:cyclic-quaternion}%

We want to prove a converse to \Cref{Thm:indecomposability}. If the $p$-Sylow of~$G$ is cyclic or generalized quaternion (for $p=2$ in the latter case) then $\StPerm(G;k)$ will decompose and we want to identify the components.

We are going to use a general fact about `big' tt-categories~$\cT$, \ie compactly-rigidly generated tensor-triangulated categories. Recall from \Cref{Rem:K|U} that $\cT|_U:=\cT/\Loc{\cT^c_Y}$ when $U=Y^\compl$ is the complement of a Thomason subset (\eg\ a quasi-compact open).
Any coproduct-preserving tt-functor~$f^*\colon \cT\to \cS$ between big tt-categories induces a continuous map~$f:=\Spc(f^*)\colon \SpcS\to \SpcT$ and the functor $f^*$ sends $\cT_Y$ into~$\cS_{f\inv(Y)}$ by~\cite[Theorem~6.3]{balmer-favi:idempotents}. In particular $f\inv(U)^\compl=f\inv(U^\compl)$ is Thomason in$~\SpcS$.
It follows that $f^*$ induces a tt-functor on the localizations~$\bar f^*\colon \cT|_U\to \cS|_{f\inv(U)}$, that we can denote~$f^*|_U$.

\begin{Prop}
\label{Prop:ff-equiv}%
With above notation for~$f^*\colon \cT\to \cS$ and~$U\subseteq\SpcT$, suppose that $f^*$ is fully faithful. Then the induced functor
\[
f^*|_U\colon \cT|_U \to \cS|_{f\inv(U)}
\]
remains fully faithful. Moreover, suppose in addition that $\cS$ is generated as a tensor-triangulated category by~$f^*(\cT)$ and some collection of objects supported outside of~$f\inv(U)$, that is, objects in~$\Loc{\cS^c_{f\inv(U)^\compl}}$. Then $f^*|_U$ is an equivalence.
\end{Prop}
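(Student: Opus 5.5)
Write $Y=U^\compl$ and $Y'=f\inv(Y)=f\inv(U)^\compl$, and let $\cL_Y=\Loc{\cT^c_Y}$ and $\cL_{Y'}=\Loc{\cS^c_{Y'}}$ denote the localizing ideals we quotient by. The plan is to work with the finite-localization idempotent triangles rather than with Verdier quotients directly. For a Thomason $Y$ there is an idempotent triangle $e_Y\to\unit\to f_Y\to\Sigma e_Y$ in~$\cT$ with $e_Y\in\cL_Y$, and the localization $\cT\to\cT|_U$ identifies with $X\mapsto f_Y\otimes X$ onto the full subcategory $f_Y\otimes\cT$. The key input is \cite[Theorem~6.3]{balmer-favi:idempotents}, which says $f^*(e_Y)\cong e_{Y'}$ and $f^*(f_Y)\cong f_{Y'}$. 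Under these identifications, $f^*|_U$ is just the restriction of $f^*$ to $f_Y\otimes\cT\to f_{Y'}\otimes\cS$, and this restriction is well defined since $f^*$ is monoidal.

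For full faithfulness, take $X,X'\in\cT$. Then
\[
\Hom_{\cT|_U}(X,X')\cong\Hom_\cT(f_Y\otimes X,f_Y\otimes X')
\]
and
\[
\Hom_{\cS|_{f\inv(U)}}(f^*X,f^*X')\cong\Hom_\cS(f_{Y'}\otimes f^*X,f_{Y'}\otimes f^*X')=\Hom_\cS(f^*(f_Y\otimes X),f^*(f_Y\otimes X')).
\]
The map between these two is induced by~$f^*$, so it is bijective because $f^*$ is fully faithful on all of~$\cT$. The step that needs care is checking that the functor $\bar f^*$ obtained from the universal property of the localization really coincides with this restriction. I would verify this by noting that both functors are the unique factorization of $\cT\to\cS\to\cS|_{f\inv(U)}$ through $\cT\to\cT|_U$.

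For essential surjectivity, the image of $f^*|_U$ is a localizing tensor-triangulated subcategory of $\cS|_{f\inv(U)}$. It is triangulated because it is the essential image of a fully faithful exact functor. It is closed under coproducts because $f^*$ and the localizations preserve coproducts. It is closed under tensor products because $f^*|_U$ is monoidal. By hypothesis, $\cS$ is generated by $f^*(\cT)$ together with objects of~$\cL_{Y'}$, and the latter become zero in $\cS|_{f\inv(U)}$. The localization functor $\cS\to\cS|_{f\inv(U)}$ is essentially surjective and exact, and it preserves coproducts and tensor products. Hence every object of $\cS|_{f\inv(U)}$ lies in the localizing tensor subcategory generated by the image of $f^*(\cT)$, which is the image of~$f^*|_U$. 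That image is already closed under these operations, so $f^*|_U$ is essentially surjective, and therefore an equivalence.

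I expect the main obstacle to be interpreting ``generated as a tensor-triangulated category'' so that the closure argument actually goes through. I would read it as generation under cones, shifts, summands, coproducts and tensor products. In that case the remaining check is that the essential image of a fully faithful exact coproduct-preserving functor is closed under summands. This holds in a category with countable coproducts, where idempotents split via the Eilenberg swindle and homotopy colimits.
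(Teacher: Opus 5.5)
Your proposal is correct and follows the paper's proof. For essential surjectivity it uses the same observation as the paper, namely that the extra generators in $\Loc{\cS^c_{f\inv(U)^\compl}}$ become zero in $\cS|_{f\inv(U)}$, and you usefully check that the essential image is a localizing tensor subcategory (thick by the Eilenberg swindle); for full faithfulness the paper simply cites \cite[Lemma~5.1]{BalmerGallauer25a}, while you give a sound self-contained argument via the idempotent triangles and $f^*(f_Y)\cong f_{f\inv(Y)}$ from \cite[Theorem~6.3]{balmer-favi:idempotents}.
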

\begin{proof}
The full-faithfulness of~$f^*|_U$ is~\cite[Lemma~5.1]{BalmerGallauer25a}. For the moreover part, the generators supported outside of~$f\inv(U)$ become zero in~$\cS|_{f\inv(U)}$ by definition. It follows that $f^*|_U$ is also essentially surjective.
\end{proof}

\begin{Rem}
We are going to apply the above to $f^*$ given by inflation along $G\onto G/N$ for a particular normal subgroup~$N\normaleq G$. Note that inflation does not descend to stable permutation categories, very much for the same reasons as with the usual stable module category: It does not preserve \eqperf\ complexes. (For instance $k(G/N)$ is inflated from a $G/N$-perfect but if $N$ admits a non-trivial $p$-subgroup~$H\le N$ the object~$\Psi^H(k(G/N))\simeq k^{[G:N]}$ is not perfect.) However, the version of inflation that we shall use is the one on derived permutation categories $\Infl^{G/N}_G\colon \DPerm(G/N;k)\to \DPerm(G;k)$, which is fine~\cite[\S\,4]{tt-perm}.
\end{Rem}

\begin{Prop}
\label{Prop:Infl-ff}%
For every normal subgroup $N\normaleq G$, the coproduct-preserving tt-functor $\Infl^{G/N}_G\colon \DPerm(G/N;k)\to \DPerm(G;k)$ is fully faithful.
\end{Prop}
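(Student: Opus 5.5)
We will prove that inflation is fully faithful by reducing to compact generators and then computing homs by hand. Both $\DPerm(G/N;k)$ and $\DPerm(G;k)$ are compactly generated. Inflation is a coproduct-preserving tt-functor that sends the compact generators $k(X)$, for $X$ a finite $G/N$-set, to compact objects $k(X)$ with $X$ viewed as a $G$-set. By a standard dévissage it therefore suffices to check two things: that inflation is fully faithful on compacts, and that it preserves coproducts (already known).

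Here is the dévissage in more detail. For a compact object $c$, the full subcategory of objects $t$ for which $\Hom(c,t)\to\Hom(\Infl c,\Infl t)$ is bijective is localizing, since $\Infl c$ is compact and $\Infl$ commutes with coproducts. So it suffices to take $t$ compact. Then, for fixed $t$, the subcategory of objects $c$ for which the map is bijective is closed under triangles, shifts and coproducts. Hence everything reduces to full faithfulness on $\cK(G/N)=\Kb(\perm(G/N;k)^\natural)$.

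On bounded homotopy categories of additive categories, a functor induced by a fully faithful additive functor is fully faithful. Hom in $\Kb$ is the homotopy classes of chain maps, and both chain maps and homotopies are defined purely in terms of the additive Hom groups. So it is enough to show that $\Infl\colon\perm(G/N;k)^\natural\to\perm(G;k)^\natural$ is fully faithful, and this is an actual statement about modules. Inflation of modules $\mmod{k(G/N)}\to\mmod{kG}$ is the restriction of scalars along the surjection $kG\onto k(G/N)$. Its essential image consists of the modules on which $N$ acts trivially. A $kG$-linear map between two such modules is the same thing as a $k(G/N)$-linear map. Hence inflation is fully faithful on all modules, and in particular on $p$-permutation modules, summands included.

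The step I expect to need the most care is justifying that the big $\Infl^{G/N}_G$ of \cite[\S\,4]{tt-perm} is indeed the coproduct-preserving extension of the degreewise module-level inflation on compacts. Depending on the model (for instance $\Loc{\perm}$ inside $\Komp(\Perm)$), this is immediate, since inflation acts degreewise on arbitrary complexes of permutation modules and preserves coproducts. Once that identification is in place, the dévissage above closes the argument.
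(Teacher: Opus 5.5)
Your proposal is correct and follows essentially the same route as the paper: reduce full faithfulness to compact objects, then to the additive categories of $p$-permutation modules, where inflation is restriction of scalars along the surjection $kG\onto k(G/N)$. The only difference is in the reduction to compacts. The paper checks that the unit $\Id\to f_*f^*$ is an isomorphism, using cocontinuity of $f^*$ and $f_*$, whereas you use a two-variable d\'evissage. Your d\'evissage works once you quantify over all compact $c$ at once (or over all shifts $\Sigma^n c$), so that the five lemma applies.
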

\begin{proof}
As with any coproduct-preserving tt-functor one can verify full-faithfulness on compacts (check that the unit $\Id\to f_*f^*$ is an isomorphism and use cocontinuity of~$f^*$ and~$f_*$). Here, the functor $\Infl^{G/N}_G\colon \Kb(\perm(G/N;k)^\natural)\to \Kb(\perm(G;k)^\natural)$ is fully faithful because so is $\Infl^{G/N}_G\colon \perm(G/N;k)\to\perm(G;k)$, the latter because it is the restriction of the usual restriction-of-scalars functor on modules $\Infl^{G/N}_G\colon \mmod{k(G/N)}\to\mmod{kG}$ along a ring epimorphism~$kG\onto k(G/N)$.
\end{proof}

Let us start discussing our examples of cyclic or generalized quaternions groups.
We begin with the derived category of permutation~$\cK(G)=\DPerm(G;k)^c=\Kb(\perm(G;k)^\natural)$ as recalled in \Cref{Rec:perm,Rec:tt-perm}.
\begin{Lem}
\label{Lem:disconnect}%
Let $G$ be a $p$-group with a unique subgroup of order~$p$ (\Cref{Rem:quaternion}). Let~$N=Z(G)\normal~G$ be that subgroup.
\begin{enumerate}[\rm(a)]
\item
\label{it:disconnect-1}%
The spectrum $\Spc(\cK(G))$ is the union of two closed subsets~$Z_1=\Img(\Spc(\Psi^N))$ and $Z_2=\supp(k(G/N))$ whose intersection is exactly the closed point~$\cM(N)$.
\smallbreak
\item
\label{it:disconnect-2}%
For the stable permutation category, the open subspace~$\Spc(\stperm(G;k))\subset\Spc(\cK(G))$ of \Cref{Cor:Spc(stperm)} is disconnected as follows
\[
\Spc(\stperm(G;k))=\Img(\Spc(\Psi^N))\ \sqcup\ \{*\}
\]
where $\Psi^N\colon \stperm(G;k)\to \stperm(G/N;k)$ is (stable) modular $N$-fixed points and the singleton $\{*\}$ is the image of $\Spc(\stmod(G;k))=\{*\}$.
\smallbreak
\item
\label{it:disconnect-3}%
There is an equivalence of tt-categories
\[
\StPerm(G;k)\isoto \StPerm(G/N;k)\times \StMod(G;k)
\]
given by modular $N$-fixed-points~$\Psi^N\colon \StPerm(G;k)\to \StPerm(G/N;k)$ and the canonical localization~$\bar\Upsilon\colon \StPerm(G;k)\onto \StMod(G;k)$.
This decomposition preserves the compact objects and induces~\eqref{it:disconnect-2} on spectra.
\end{enumerate}
\end{Lem}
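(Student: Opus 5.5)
For~\eqref{it:disconnect-1}, the plan is to use the stratification~\eqref{eq:spcK-strata}. Since $N$ is the unique subgroup of order~$p$, every non-trivial $p$-subgroup $H\le G$ contains~$N$. By \cite[Proposition~5.17]{tt-perm}, $\Psi^H\cong\Psi^{H/N}\circ\Psi^N$ for such~$H$, so the strata $\cV_{\WGH}$ with $H\ge N$ all lie in $Z_1=\Img(\Spc(\Psi^N))$. Conversely, $\Img(\Spc(\Psi^N))$ is exactly the union of those strata, and it is closed because $\Spc(\Psi^N)$ is a closed embedding. The only stratum missing from $Z_1$ is $\cV_{G}=\Spc(\Db(kG))$, the one for $H=1$. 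Next I would compute the support of $k(G/N)$ stratum by stratum:
\begin{itemize}
\item $\Psi^H(k(G/N))=k((G/N)^H)$ vanishes for $H\not\le N$, so $Z_2$ misses every stratum except those for $H=1$ and $H=N$.
\item On $\cV_G$, $\Upsilon(k(G/N))$ is a non-zero module, so its support is all of the local space $\cV_G$.
\item On $\cV_{G/N}$, $\Psi^N(k(G/N))=k(G/N)$ is the free $k(G/N)$-module, whose support in $\Spc(\Db(k(G/N)))$ is only the closed point~$\cM(N)$.
\end{itemize}
Hence $Z_2=\cV_G\cup\{\cM(N)\}$, it is closed as a support, $Z_1\cup Z_2$ is everything, and $Z_1\cap Z_2=\{\cM(N)\}$.

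For~\eqref{it:disconnect-2}, remove the closed points as in \Cref{Cor:Spc(stperm)}. The closed point $\cM(N)$ is removed, so the two pieces $Z_1\cap U$ and $Z_2\cap U$ become disjoint and each is closed in~$U$; therefore both are clopen. By \Cref{Thm:colim-stperm}, or directly from the proof of \Cref{Thm:colim-stperm}, the piece $Z_1\cap U$ is the image of $\Spc(\stperm(G/N;k))$. The piece $Z_2\cap U$ is $\cV_G$ minus its closed point. That set is $\Proj(\Hm^\sbull(G,k))$, which is a single point because $G$ has $p$-rank one. It is also the image of $\Spc(\stmod(kG))$ under $\bar\Upsilon$, using \Cref{Prop:stperm->>stmod} and~\eqref{eq:Spc-stmod}.

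For~\eqref{it:disconnect-3}, the disconnection in~\eqref{it:disconnect-2} together with \cite[Theorem~2.11]{Balmer07} gives
\[
\StPerm(G;k)\cong\StPerm(G;k)|_{U_1}\times\StPerm(G;k)|_{U_2},
\]
and it remains to identify the factors. The second factor is the localization away from everything except the generic point of~$\cV_G$, which by \Cref{Prop:stperm->>stmod} is $\StMod(kG)$ via~$\bar\Upsilon$. The first factor requires identifying the restriction of $\StPerm(G;k)$ to~$U_1$ with $\StPerm(G/N;k)$. Here I would apply \Cref{Prop:ff-equiv} to $f^*=\Infl^{G/N}_G$, which is fully faithful by \Cref{Prop:Infl-ff}, taking $U$ to be the complement of the closed points of $\Spc(\cK(G/N))$. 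Then I need to check two things:
\begin{itemize}
\item $f\inv(U)=U_1$. This should follow because $\Psi^N\circ\Infl\cong\Id$, so $f$ is a retraction of $\Spc(\Psi^N)$, together with an analysis of where $f$ sends $\cV_G$ (into $\cV_{G/N}$-type points).
\item $\DPerm(G;k)$ is generated by inflated objects together with objects supported outside~$U_1$. For the second kind, $k(G/H)$ with $H\not\ge N$ forces $H=1$, and $kG$ is supported at $\cM(1)$. The key case is $k(G/H)$ with $N\le H$, which is inflated, so the generation should hold.
\end{itemize}
Finally I would check that the resulting inverse equivalence is $\Psi^N$, using $\Psi^N\circ\Infl\cong\Id$.

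The main obstacle I expect is computing $f\inv(U)$: deciding exactly which points of $\Spc(\cK(G))$ are sent by inflation to closed points of $\Spc(\cK(G/N))$. In particular I need that all of $\cV_G$ is sent to the closed point $\cM_{G/N}(1)$, which would follow from inflation of perfect $G/N$-complexes being acyclic-free and supported on~$Z_2$. I would verify this stratum by stratum using $\check\Psi^H\circ\Infl$ (compare \cite[\S\,4--5]{tt-perm}).
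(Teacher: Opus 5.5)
Your proposal is correct in outline and has the same structure as the paper: strata for (a), intersection with the punctured spectrum for (b), and \cite[Theorem~2.11]{Balmer07} plus \Cref{Prop:ff-equiv} applied to $f^*=\Infl^{G/N}_G$ for (c). In (a) you compute $Z_2$ stratum by stratum via $\Psi^H(k(G/N))=k((G/N)^H)$. The paper instead identifies $Z_2$ with the image of $\Spc(\Res^G_N)$ and uses that $\Spc(\Db(kN))\to\Spc(\Db(kG))$ is a bijection. Both routes work, but one justification in yours is wrong as written. Being a \emph{non-zero} module does not make the support all of $\cV_G$: the free module $kG$ is supported only at $\cM(1)$. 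What you need is that $k(G/N)$ is \emph{non-projective}, since the non-trivial group $N$ acts trivially on it. Then it is non-zero in $\stmod(kG)$, and its support also contains the point of $\Proj(\Hm^\sbull(G,k))$.

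In (c), the step you flag as the main obstacle resolves in one line, and this is why the paper first applies \Cref{Prop:ff-equiv} only to $Y^\compl$, where $Y=\{\cM_{G/N}(1)\}=\supp(k(G/N))$. The general identity $f\inv(\supp(a))=\supp(f^*a)$ gives $f\inv(Y)=\supp(\Infl^{G/N}_G k(G/N))=\supp(k(G/N))=Z_2$. Hence $f(Z_2)=\{\cM_{G/N}(1)\}$, with no stratum-by-stratum analysis. The paper then localizes both sides of the resulting equivalence further at the remaining closed points.

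Your one-step version also works once you combine this with the retraction. Since $f\circ\psi^N=\id$ and $\psi^N$ is a bijection onto $Z_1$, you get $f\inv(U)\cap Z_1=\psi^N(U)=U_1$, because $\psi^N$ preserves and reflects closed points (as in the proof of \Cref{Thm:colim-stperm}). Also $f\inv(U)\cap Z_2=\varnothing$, since $\cM_{G/N}(1)\notin U$. So $f\inv(U)=U_1$. Your generation argument ($kG$ supported at $\cM(1)$, every other $k(G/H)$ inflated) is the paper's.

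Finally, to conclude that $(\Psi^N,\bar\Upsilon)$ is an equivalence, you must also note, as the paper does, that $\Psi^N$ vanishes on the factor supported on $U_2=\supp(k(G/N))\cap\Spc(\stperm(G;k))$. This holds because $\Psi^N(k(G/N))=k(G/N)$ is free, hence zero in $\stperm(G/N;k)$. Only then does $\Psi^N\circ\Infl^{G/N}_G\cong\Id$ make $\Psi^N$ the inverse of the inflation equivalence on the $U_1$-factor.
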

\begin{proof}
Since $N$ is normal the map $\psi^N:=\Spc(\Psi^N)\colon \Spc(\cK(G/N))\to \Spc(\cK(G))$ is a homeomorphism onto its closed image (\cite[Proposition~7.18]{tt-perm}).
For every non-trivial subgroup~$H\le G$ we have $N\normaleq H$ and therefore the $H$-fixed-points functor $\Psi^H$ factors via the $N$-fixed-points functor~$\Psi^N$ (\cite[Corollary~5.18]{tt-perm}). Hence $Z_1=\Img(\Psi^N)$ contains $\Img(\Psi^H)$ and in particular the stratum~$\cV_{\WGH}$ in~\eqref{eq:spcK-strata}.
Thus the only stratum of~$\Spc(\cK(G))$ in~\eqref{eq:spcK-strata} that is not contained in~$Z_1$ is the cohomological open~$\cV_{\Weyl{G}{1}}=\Spc(\Db(kG))$. Since $N$ is the maximal elementary abelian subgroup of~$G$, or by direct computation of the cohomology, we see that $\Spc(\Res^G_N)$ yields a bijection of Sierpi\'nski 2-point spaces $\Spc(\Db(kN))\isoto \Spc(\Db(kG))$. Since $Z_2=\supp(k(G/N))$ is as usual the image of $\Spc(\Res^G_N)$ it follows that $Z_2$ contains the last stratum~$\cV_G$ and therefore $Z_1\cup Z_2$ is indeed the whole of~$\Spc(\cK(G))$ by~\eqref{eq:spcK-strata}. Their intersection $Z_1\cap Z_2=\Img(\psi^H)\cap Z_2$ can be computed by looking at the preimage of~$Z_2=\supp(k(G/N))$ under~$\psi^N$, which is simply the support of~$\Psi^N(k(G/N))\cong k(G/N)$ in~$\cK(G/N)$. That free $k(G/N)$-module has support~$\cM_{G/N}(1)$ in~$\Spc(\cK(G/N))$. Its image under~$\psi^N\colon \Spc(\cK(G/N)\isoto Z_1$ is~$\psi^H(\cM_{G/N}(1))=\cM_G(N)$. This proves~\eqref{it:disconnect-1}.

By \Cref{Cor:Spc(stperm)}, we can intersect the closed cover $\Spc(\cK(G))=Z_1\cup Z_2$ with the open subset $\Spc(\stperm(G))$ of~$\Spc(\cK(G))$ and we get the disconnection:
\[
\Spc(\stperm(G;k))=U_1\sqcup U_2\quad\textrm{where}\quad U_i=Z_i\cap \Spc(\stperm(G;k)),\ i=1,2
\]
since $Z_1\cap Z_2$ is one of the closed points we remove in this process. It follows immediately from \Cref{Cor:2-functoriality} that $U_1=\Img(\psi^N)\cap \Spc(\stperm(G;k))$ remains the image of $\Spc(\Psi^N)$ for the functor~$\Psi^N$ descended to the stable permutation categories. As we saw above, $Z_2$ consisted of 3 points, $\cM(N)$ and the two points of~$\Spc(\Db(kG))$, including~$\cM(1)$. Removing the two closed ones,~$\cM(N)$ and~$\cM(1)$, we are left with $U_2=\{\ast\}$ the singleton from~$\Spc(\stmod(kG))$.
This gives~\eqref{it:disconnect-2}.

By~\eqref{it:disconnect-2}, the tt-category~$\stperm(G;k)$ must decompose as $\cK_1\times \cK_2$ with $\Spc(\cK_1)=U_1=\Img(\psi^N)$ and~$\Spc(\cK_2)=U_2=\{*\}$; see~\cite{Balmer07}. It is easy to identify the second component: It is the restriction of~$\cK(G)$ on the open singleton~$\{*\}=\Spc(\stmod(kG))$ in~$\Spc(\cK(G))$. By \Cref{Prop:stperm->>stmod} we know that this quotient is~$\stmod(kG)$. The main claim is that $\Psi^N\colon \stperm(G;k)\to \stperm(G/N;k)$ induces an equivalence $\cK_1\isoto \stperm(G/N;k)$ on~$U_1$. First note that this exists for the closed complement of~$U_1$, namely the singleton~$\{*\}$, is the support of~$k(G/N)$ as discussed in the first part, and $\Psi^N(k(G/N))=k(G/N)$ becomes zero in~$\stperm(G/N;k)$ by \Cref{Exa:naive-G-perfect}.
Furthermore, since the composite $\Psi^N\circ\Infl^{G/N}_G\colon \cK(G/N)\to \cK(G)\to \cK(G/N)$ is isomorphic to the identity, it suffices to show that $\Infl^{G/N}_G$ descends to an equivalence $\stperm(G/N;k)\isoto \cK_1$ to get the result. We need to be careful since inflation does not exist on the entire stable permutation categories.

We use the big tt-categories.
Let us consider $f^*=\Infl^{G/N}_G\colon \DPerm(G/N;k)\to \DPerm(G;k)$ and $Y:=\{\cM_{G/N}(1)\}$ the closed point in the cohomological open of~$G/N$, which is the support of the free $k(G/N)$-module~$k(G/N)$. Again, we write $f=\Spc(f^*)\colon \Spc(\cK(G))\onto \Spc(\cK(G/N))$ for the induced map on spectra. The preimage $f\inv(Y)$ in~$\Spc(\cK(G))$ of the subset~$Y=\supp(k(G/N))$ is as usual the support of the image object~$\Infl^{G/N}_G(k(G/N))$ by the functor~$f^*$, namely the permutation $kG$-module~$k(G/N)$. The support of the latter is the closed subset~$Z_2$ of~\eqref{it:disconnect-1}.
By fully-faithfulness of~$f^*$ (\Cref{Prop:Infl-ff}), we can apply \Cref{Prop:ff-equiv} to get a well-defined fully-faithful functor
\begin{equation}
\label{eq:aux-f*}%
f^*|_{Y^\compl}\colon \DPerm(G/N;k)|_{Y^\compl}\to \DPerm(G;k)|_{Z_2^\compl}
\end{equation}
from the localization of~$\cT=\DPerm(G/N;k)$ on the open~$Y^\compl=\Spc(\cK(G/N))\sminus\{\cM_{G/N}(1)\}$ to the localization of~$\cS=\DPerm(G;k)$ on the open~$Z_2^\compl=Z_1\sminus\{\cM(N)\}$ of~$\Spc(\cK(G))$, where we use~\eqref{it:disconnect-1} in the last equality.
We claim that this $f^*|_{Y^\compl}$ is an equivalence by the `moreover part' of~\Cref{Prop:ff-equiv}. Indeed, the generators $\SET{k(G/H)}{H\le G}$ of~$\DPerm(G;k)$ are almost all in the essential image of~$f^*$, that is, inflated from~$G/N$. This is because $N$ is contained in all non-trivial subgroups of~$G$. Only the generator $kG$ is not in the essential image of~$f^*$. But $kG$ has support in~$Z_2$. So~\eqref{eq:aux-f*} is indeed an equivalence by \Cref{Prop:ff-equiv}.

Note that the two tt-categories in~\eqref{eq:aux-f*} have (homeomorphic) spectra consisting of~$Y^\compl=\Spc(\cK(G/N))\sminus\{\cM_{G/N}(1)\}$ and $Z_2^\compl=\Spc(\cK(G))\sminus \supp(k(G/N))$ respectively, which are homeomorphic under the restriction of~$\psi^H=\Spc(\Psi^H)$ on~$Y^\compl$, with inverse $\Spc(f^*)|_{Z_2^\compl}$.
We can further localize these two categories by removing all the remaining closed points~$\SET{\cM_{G/N}(\bar H)}{1\neq\bar{H}\le G/N}$ in the former, respectively their images $\SET{\cM_{G}(H)}{N\lneq H\le G}$ in the latter. In other words, on the left-hand side of~\eqref{eq:aux-f*} we localize $\DPerm(G/N;k)$ away from all its closed points, which gives the stable permutation category of~$G/N$, and on the right-hand side of~\eqref{eq:aux-f*} we localize away from all the closed points and away from~$Z_2$, which gives the stable permutation category of~$G$ localized away from (what remains of)~$Z_2$. This is precisely the wanted localization $\StPerm(G;k)|_{U_1}$ since $U_1=\Spc(\stperm(G;k))\cap Z_2^\compl$ by construction. Consequently $f^*=\Infl^{G/N}_G$ yields a tt-equivalence
\[
\StPerm(G/N;k)\isoto \StPerm(G;k)|_{U_1}.
\]
Since $\Psi^N$ is its left inverse, it is also an equivalence and we get~\eqref{it:disconnect-3}.
\end{proof}

\begin{Thm}
\label{Thm:quaternion}%
Let $p=2$ and let $G$ be a finite group whose $2$-Sylow is generalized quaternion of order~$2^n$, as in~\eqref{eq:Q2n}.
Let $H\le G$ be a cyclic subgroup of order~$2$, \ie the center of some $2$-Sylow of~$G$.
Then (stable) modular $H$-fixed-points $\Psi^{H}$ (\Cref{Cor:2-functoriality}) and the canonical localization~$\bar\Upsilon$ in~\eqref{eq:four-tt-cats} yield an equivalence
\[
(\Psi^{N},\bar\Upsilon)\colon \StPerm(G;k)\isoto \StPerm(\WGH;k)\times \StMod(k(G)))
\]
corresponding to a decomposition $\Spc(\stperm(G;k))=\Img(\Spc(\Psi^H))\sqcup\{*\}$.
\end{Thm}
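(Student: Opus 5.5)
The plan is to reduce to the $p$-group case, which is \Cref{Lem:disconnect}, by adapting its proof from a $p$-group to an arbitrary $G$ with generalized quaternion $2$-Sylow~$P$. Fix $P$ with $H=Z(P)$ its unique subgroup of order~$2$. Every non-trivial $2$-subgroup of~$G$ is conjugate to a subgroup of~$P$ and hence contains a conjugate of~$H$. In particular, every subgroup of order~$2$ is conjugate to~$H$: every involution of~$G$ lies in some Sylow, and each Sylow has a unique involution.

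The first step is the topology, by the same argument as \Cref{Lem:disconnect}\,\eqref{it:disconnect-1}. Let $Z_1=\Img(\check\psi)$ be the image of $\Spc(\Psi^H)\colon\Spc(\cK(\WGH))\to\Spc(\cK(G))$. For each non-trivial $2$-subgroup~$L$ we may conjugate so that $H\normaleq L$, since $H$ is central in a Sylow containing~$L$. Then $\Psi^L$ factors through~$\Psi^H$ by \cite[Corollary~5.18]{tt-perm}, so the strata $\cV_{\Weyl{G}{L}}$ with $L\neq1$ lie in~$Z_1$. Set $Z_2=\supp(k(G/H))$, the image of $\Spc(\Res^G_H)$. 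It contains the cohomological open $\Spc(\Db(kG))$, because $H$ is, up to conjugacy, the only elementary abelian $2$-subgroup. By Quillen stratification, $\Proj\Hm^\sbull(G,k)$ is then a single point, so the restriction map is surjective onto the two-point space $\Spc(\Db(kG))$. Next, $\psi^H$ is a closed embedding; I would need that $H\normaleq N_G(H)$ suffices here, perhaps by going through $\Res^G_{N_G(H)}$, or by citing \cite[Proposition~7.18]{tt-perm} in its general form for the $\Psi^H$ image. Granting this, $Z_1\cap Z_2$ is computed as the support of $\Psi^H(k(G/H))$ in~$\cK(\WGH)$. Since $(G/H)^H$ is a free $\WGH$-set, this support is the single closed point $\cM_{\WGH}(1)$, which maps to~$\cM_G(H)$. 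Removing closed points (\Cref{Cor:Spc(stperm)}) then gives the disconnection $\Img(\Spc(\Psi^H))\sqcup\{*\}$. By~\cite{Balmer07} we obtain a product decomposition, and the singleton factor is $\StMod(kG)$ by \Cref{Prop:stperm->>stmod}, as in the lemma.

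The main obstacle is identifying the other factor with $\StPerm(\WGH;k)$, since inflation is no longer available when $H$ is not normal in~$G$. My first attempt would be to pass through the normalizer $M=N_G(H)$, which has the same Sylow~$P$ and in which $H\normal M$. For~$M$, the inflation argument of \Cref{Lem:disconnect}\,\eqref{it:disconnect-3} goes through verbatim using $f^*=\Infl^{M/H}_M$, \Cref{Prop:Infl-ff} and \Cref{Prop:ff-equiv}. The key input is that every generator $k(M/L)$ with $L$ a $2$-subgroup other than~$1$ contains~$H$ and is therefore inflated, while the $p$-permutation summands of $k(M/L)$ are likewise inflated. The remaining generators, with $L$ of odd order, have vertex trivial modulo~$H$ and are supported in~$Z_2$; I would double-check this last point.

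To pass from $M$ to~$G$, I would compare $\StPerm(G)$ with $\StPerm(M)$ via $\Res^G_M$ and $\Ind_M^G$ (\Cref{Cor:2-functoriality}) restricted to the $U_1$-components. Since $\Psi^H\circ\Res^G_M\cong\Psi^H$ on the target $\StPerm(\WGH)$, it suffices to prove that $\Res^G_M$ induces an equivalence on the $U_1$-parts. The index $[G\!:\!M]$ is odd, so restriction is faithful with descent (\Cref{Rem:descent}). Alternatively, I would try to show directly that $\Spc(\Res^G_M)$ is a homeomorphism on~$U_1$, using the colimit description of \Cref{Thm:colim-stperm}: all elementary abelian sections meeting~$U_1$ involve~$H$ up to conjugacy and are controlled by~$M$. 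Then I would use that on the $U_1$-component the descent ring $A^G_M$ splits trivially, which makes restriction an equivalence. This Green-correspondence-type step is where I expect the real work to lie.
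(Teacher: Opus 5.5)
\textbf{There is a genuine gap at the decisive step.} Going from $M=N_G(H)$ to $G$ is where the content of the theorem lies, and you leave it unproved. Your two suggestions do not close it as stated. A homeomorphism $\Spc(\Res^G_M)$ on~$U_1$ would not by itself give an equivalence of categories. The claim that ``the descent ring $A^G_M$ splits trivially on the $U_1$-component'' is asserted without a reason.

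\textbf{The missing ingredient.} It is the group-theoretic ``little miracle'' of the paper, applied here to $G/M$:
\begin{itemize}
\item A coset $gM$ is $H$-fixed if and only if $H$ normalizes ${}^gH$.
\item If ${}^gH\neq H$, then $H\cdot{}^gH\cong C_2\times C_2$ would be a $2$-subgroup of~$G$. This is impossible inside a generalized quaternion Sylow, which has a unique involution.
\item Hence $(G/M)^H=\{M\}$ and $\Psi^H(k(G/M))\cong k$.
\end{itemize}
Since $[G\!:\!M]$ is odd, the unit $\unit\to A^G_M=k(G/M)$ splits, giving $k(G/M)=k\oplus X$ with $\Psi^H(X)=0$. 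Therefore $\supp(X)\cap\Img(\psi^H)=\emptyset$, so $X$ vanishes on~$U_1$ and $A^G_M\cong\unit$ as rings on the $U_1$-component. Recall that $\StPerm(M;k)\simeq\Mod_{\StPerm(G;k)}(A^G_M)$. This splits along the two components compatibly with yours, because $\Spc(\Res^G_M)$ sends $U_1^M$ into $U_1^G$ (using $\Psi^H\circ\Res^G_M\cong\Psi^H$) and sends $*$ to~$*$. Hence $\Res^G_M$ is an equivalence on the $U_1$-parts. With this added, your route works: the $M$-case via inflation is fine, and so is your analysis of the generators of $\DPerm(M;k)$.

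\textbf{Comparison with the paper.} The paper is more economical. It never analyzes $\Spc(\cK(G))$ for non-$p$-groups and never redoes \Cref{Lem:disconnect} for~$M$. Instead:
\begin{itemize}
\item It applies the lemma only to the Sylow~$P$.
\item It uses that $\Res^G_P$ and $\Res^{\WGH}_{P/H}\times\Res^G_P$ satisfy descent, since the index is odd.
\item It uses the same miracle in the form $N_G(H,P)=N_G(H)$. This shows that $(\Psi^H,\bar\Upsilon)$ carries the descent ring $k(G/P)$ to the descent ring $\big(k(N_G(H)/P),k(G/P)\big)$, so the equivalence on module categories yields one on descent categories.
\end{itemize}
In the paper, the decomposition of the spectrum is a consequence rather than an input.

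\textbf{Two smaller points in your topological step.}
\begin{itemize}
\item When $H$ is not normal in~$G$, \cite[Corollary~5.18]{tt-perm} factors $\Psi^L$ through $\Psi^H$ only after restricting from $\Weyl{G}{L}$ to $(N_G(L)\cap N_G(H))/L$. You should choose $L\le P$ with $N_P(L)$ a Sylow of $N_G(L)$, so that this restriction has odd index and is surjective on spectra.
\item You do not need $\psi^H$ to be a closed embedding. The identity $Z_1\cap Z_2=\psi^H\big((\psi^H)^{-1}(Z_2)\big)$ holds for any map. Moreover, $\{*\}$ is clopen in $\Spc(\stperm(G;k))$ because $Z_2$ is closed and the cohomological open $\cV_{\Weyl{G}{1}}$ is open.
\end{itemize}
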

\begin{proof}
Let $P<G$ be a $2$-Sylow of~$G$ containing~$H$, necessarily as its center (\Cref{Rem:quaternion}).
We have $P/H\le \WGH$ and the following diagram commutes (as follows from~\cite[Proposition~5.15]{tt-perm} by localization):
\begin{equation}
\label{eq:aux-quaternion}%
\vcenter{\xymatrix@C=5em{
\StPerm(G;k)\ar[r]^-{(\Psi^{H},\bar\Upsilon)} \ar[d]_-{\Res^G_P}
& \StPerm(\WGH;k)\times \StMod(k(G)) \ar[d]^-{\Res^{\WGH}_{P/H}\times\Res^G_P}
\\
\StPerm(P;k)\ar[r]^-{(\Psi^{H},\bar\Upsilon)}_-{\cong}
& \StPerm(P/H;k)\times \StMod(k(P))).
}}
\end{equation}
The bottom horizontal functor is an equivalence by \Cref{Lem:disconnect} applied to the $p$-group~$P$, and to~$N=Z(P)=H$.
The vertical functors satisfy descent (\Cref{Rem:descent}), identifying~$\cT=\StPerm(G;k)$ with $\Desc_{\cT}(A)$ for~$A=k(G/P)$ and $\cS=\StPerm(\WGH;k)\times \StMod(kG)$ with~$\Desc_{\cS}(B)$ for~$B=\big(k((\WGH)/(P/H)),k(G/P)\big)$ in~$\cS$. The latter tt-ring~$B$ is obtained by applying the adjoint $\Ind_{P/H}^{\WGH}\times\Ind_P^G$ to the unit~$(k,k)$. The first component of~$B$ is~$k(N_G(H)/P)$ since $(\WGH)/(P/H)\cong N_G(H)/P$.
Now the functor~$\Psi^H\colon \perm(G;k)\to \perm(\WGH;k)$ maps~$k(G/P)$ to~$k\big((G/P)^H\big)=k(N_G(H,P)/P)$ and crucially $N_G(H,P)=\SET{g\in G}{{}^gH\le P}$ is equal to~$N_G(H)$ since $H$ is the \emph{unique} cyclic subgroup of order~2 in the generalized quaternion group~$P$. It follows that the top functor in~\eqref{eq:aux-quaternion} maps~$A=k(G/P)$ to~$(\Psi^H(k(G/P)),\bar\Upsilon(k(G/P)))=(k(N_G(H)/P),k(G/P))=B$ and therefore, by general nonsense, it is also an equivalence. (If a tensor functor $F\colon \cT\to \cS$ sends a ring~$A$ to~$F(A)=B$ and if the induced functor $F\colon \Mod_{\cT}(A)\to \Mod_{\cS}(B)$ is an equivalence then $F$ induces an equivalence $\Desc_{\cT}(A)\isoto \Desc_{\cS}(B)$; therefore, if $A$ and~$B$ satisfy descent then the original $F\colon \cT\to \cS$ was an equivalence.)
\end{proof}

\begin{Exa}
\label{Exa:DPerm-Q8}%
Let $p=2$ and $G=Q_8$ be the quaternion group, $N=Z(G)\cong C_2$ and~$G/Z(G)=D_4=V_4$ the Klein-four group. As displayed in~\cite[(2.13)]{tt-perm}, the spectrum~$\Spc(\cK(Q_8))$ of the derived permutation category looks as follows:
\begin{equation}\label{eq:Q_8}%
\kern2em\vcenter{\xymatrix@C=0.1em@R=.4em{
{\color{Brown}\overset{}{\bullet}} \ar@{-}@[Brown][rrdd] \ar@{-}@[Brown][rrrrdd] \ar@{-}@[Brown][rrrrrrdd] \ar@{~}@[Brown][rrrrrrrrdd] &&& {\color{Brown}\overset{}{\bullet}} \ar@{-}@[Brown][ldd] \ar@{-}@[Brown][rrrrrrdd]
&& {\color{Brown}\overset{}{\bullet}} \ar@{-}@[Brown][ldd] \ar@{-}@[Brown][rrrrrrdd]
&& {\color{Brown}\overset{}{\bullet}} \ar@{-}@[Brown][ldd] \ar@{-}@[Brown][rrrrrrdd]
&& {\color{Brown}\overset{\cM(N)}{\bullet}} \ar@{~}@[Brown][ldd] \ar@{-}@[Brown][dd] \ar@{-}@[Brown][rrdd] \ar@{-}@[Brown][rrrrdd]  \ar@{-}@[Gray][rrrrrrrrdd]
&&&&&&&&& {\color{OliveGreen}\bullet} \ar@{-}@[OliveGreen][ldd]^-{{\color{OliveGreen}\cV_{Q_8}\,\cong\,\cV_{C_2}}}
\\ \\
&& {\color{Brown}\bullet} \ar@{-}@[Brown][rrrrrrrdd]_-{\color{Brown}\Img(\psi^N)\,\cong\,\Spc(\cK(V_4))\kern5em}
&& {\color{Brown}\bullet} \ar@{-}@[Brown][rrrrrdd]
&& {\color{Brown}\bullet} \ar@{-}@[Brown][rrrdd]
& \ar@{.}@[Brown][r]
& {\scriptstyle\color{Brown}P^1} \ar@{~}@[Brown][rdd] \ar@{.}@[Brown][rrrrrrr]
& {\color{Brown}\bullet} \ar@{-}@[Brown][dd]
&& {\color{Brown}\bullet} \ar@{-}@[Brown][lldd]
&& {\color{Brown}\bullet} \ar@{-}@[Brown][lllldd]
&&&& {\color{OliveGreen}\ast}
\\ \\
&&&&&&&&& {\color{Brown}\bullet}
&&&&
}}
\end{equation}
This space is barely connected, thanks to the one closed point~$\cM(N)$.
To obtain the spectrum $\Spc(\stperm(Q_8;k))$ of the stable permutation category, we remove all the closed points. The resulting open is disconnected: One piece is~$\Spc(\stperm(V_4;k))$ (as pictured in~\Cref{fig:P1-doubled}) and the other is a single point~$*=\Spc(\stmod(kQ_8))$. As we saw in \Cref{Lem:disconnect}, the disconnection of the spectrum reflects the decomposition of the category $\StPerm(Q_8;k)\cong\StPerm(V_4;k)\times\StMod(kQ_8)$.
\end{Exa}

\begin{Thm}
\label{Thm:cyclic}%
Let $G$ be a finite group whose $p$-Sylow subgroup is cyclic of order~$p^n$ for~$n\ge1$. Choose a tower $1=H_n<H_{n-1}<\ldots<H_1<H_0\le G$ of cyclic subgroups~$H_i$ of order~$p^{n-i}$.
Then the stable permutation category $\StPerm(G;k)$ is equivalent to the product of the stable \emph{module} categories~$\StMod(k(\WGHi))$ for all $i=1,\ldots,n-1$. The components $\StPerm(G;k)\to \StMod(k(\WGHi))$ of this equivalence are given by the (stable) modular $H_i$-fixed points functors $\Psi^{H_i}$ composed with the canonical localization $\bar\Upsilon\colon\StPerm(\WGHi;k)\onto \StMod(k(\WGHi))$.
\end{Thm}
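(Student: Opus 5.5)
We first treat the case where $G=P=C_{p^n}$ is a cyclic $p$-group, by induction on~$n$, and then pass to general~$G$ by descent along $\Res^G_P$, exactly as in the proof of \Cref{Thm:quaternion}. Here the indices must run over $i=1,\ldots,n$, with $H_n=1$ contributing $\StMod(kG)$, as in \Cref{Thm:cyclic-intro}. The subgroup $H_0$ is a $p$-Sylow, so $\WGHi$ for $i=0$ has order prime to~$p$ and its stable module category vanishes; the statement should be read accordingly.

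\textbf{Cyclic $p$-group case.} Let $G=C_{p^n}$ and let $N=H_{n-1}$ be its unique subgroup of order~$p$. \Cref{Lem:disconnect}\,\eqref{it:disconnect-3} gives an equivalence $(\Psi^N,\bar\Upsilon)\colon\StPerm(G;k)\isoto\StPerm(G/N;k)\times\StMod(kG)$. Since $G/N\cong C_{p^{n-1}}$ has subgroup tower $H_i/N$ for $i\le n-1$, induction gives $\StPerm(G/N;k)\simeq\prod_{i=1}^{n-1}\StMod(k((G/N)/(H_i/N)))$ via $\bar\Upsilon\circ\Psi^{H_i/N}$. The base case is $n=0$: then $\StPerm(1;k)=0$, since $\Spc(\cK(1))$ is a single closed point. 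To identify the components, we use $(G/N)/(H_i/N)=G/H_i=\WGHi$ and the isomorphism $\Psi^{H_i/N}\circ\Psi^N\cong\Psi^{H_i}$ from \cite[Corollary~5.18]{tt-perm}, which passes to the stable categories by \Cref{Cor:2-functoriality}. This also recovers the count of \Cref{Exa:decomp-cyclic}.

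\textbf{General $G$ with cyclic $p$-Sylow~$P=H_0$.} Write $F=(\bar\Upsilon\circ\Psi^{H_i})_i\colon\StPerm(G;k)\to\prod_{i}\StMod(k(\WGHi))$. We form the commutative square analogous to~\eqref{eq:aux-quaternion}. The left vertical functor is $\Res^G_P$, and the right vertical functor is $\prod_i\Res^{\WGHi}_{P/H_i}$, using $P/H_i\le\WGHi$ because $P$ is abelian. Commutativity follows from \cite[Proposition~5.15]{tt-perm} by localization, together with the compatibility of $\bar\Upsilon$ with restriction. The bottom functor is an equivalence by the $p$-group case. Both vertical functors are restrictions to subgroups of index prime to~$p$: on the right the index is $[N_G(H_i):P]$. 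Hence both satisfy descent by \Cref{Rem:descent} and its $\StMod$ analogue. So it suffices, by the same general nonsense as in \Cref{Thm:quaternion}, to check that $F$ sends the descent ring $A=k(G/P)$ to the ring $B=\big(k(N_G(H_i)/P)\big)_i$ obtained by inducing units.

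The key computation, and the main point to verify carefully, is $\Psi^{H_i}(k(G/P))=k((G/P)^{H_i})=k(N_G(H_i,P)/P)$. Here $N_G(H_i,P)=\SET{g\in G}{{}^gH_i\le P}$ equals $N_G(H_i)$, because the cyclic group $P$ has a unique subgroup of each order, so ${}^gH_i\le P$ forces ${}^gH_i=H_i$. Then $\bar\Upsilon$ carries this to the ring $k(N_G(H_i)/P)=\Ind_{P/H_i}^{\WGHi}(k)$, using $(\WGHi)/(P/H_i)\cong N_G(H_i)/P$. I expect the main obstacle to be the bookkeeping here: confirming that the ring structures match and that the square of restriction and fixed-point functors commutes coherently enough for the descent comparison. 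With that in hand, $F$ induces an equivalence on descent categories, hence is itself an equivalence.
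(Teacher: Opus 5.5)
Your proposal is correct and follows the paper's own argument. The paper likewise treats the cyclic Sylow by induction on~$n$ via \Cref{Lem:disconnect}, then reduces general~$G$ to it by descent along $\Res^G_P$ as in \Cref{Thm:quaternion}, using the uniqueness of subgroups of each order in~$P$ to get $N_G(H_i,P)=N_G(H_i)$. You are also right that the product should run over $i=1,\ldots,n$, with $H_n=1$ contributing $\StMod(kG)$, which matches \Cref{Thm:cyclic-intro} and \Cref{Exa:decomp-cyclic}.
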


\begin{proof}
The argument is similar to the proof of \Cref{Thm:quaternion}: One proves it for the $p$-Sylow and one reduces to that case by descent. For the $p$-group case (the Sylow) one proceeds by an easy induction on~$n$ using \Cref{Lem:disconnect}, since in this case $G/N$ is cyclic of order~$p^{n-1}$. For the descent argument one uses once again the little miracle about the uniqueness of the subgroup~$H$ of a certain order in the $p$-Sylow, which gives us $N_G(H,P)=N_G(H)$ for every~$H=H_1,\ldots,H_{n-1}$.
Details are left to the interested reader.
\end{proof}

\Cref{Thm:quaternion,Thm:cyclic} specialize to \Cref{Thm:quaternion-intro,Thm:cyclic-intro} in the introduction in the case of $p$-groups.

\begin{Rem}
In the above two proofs, we use central cyclic subgroups~$N\normal P$ of the $p$-Sylow of~$G$, typically their center $N=Z(P)$. Note that we cannot apply \Cref{Lem:disconnect} directly to $N$ in~$G$ as there is no guarantee that $N$ remains normal in~$G$. For instance, for $P=Q_8=\{\pm1,\pm i,\pm j,\pm k\}$ the usual quaternion group, we can find a finite abelian group~$A$ of odd order on which $Q_8$ acts in such a way that the action of~$N=\{\pm1\}$ is non-trivial, for instance the $\bbF_3$-vectorspace~$A=\bbF_{\!\!3}^2$ with the action of~$Q_8$ via the usual embedding $Q_8\hookrightarrow \mathrm{GL}_2(\bbF_{\!\!3})$ given by~$i\mapsto \left(\begin{smallmatrix}0&-1\\1&0\end{smallmatrix}\right)$ and~$j\mapsto \left(\begin{smallmatrix}1&1\\1&-1\end{smallmatrix}\right)$. In such a case, if we let~$G=Q_8\ltimes A$ then the $2$-Sylow $P=Q_8\times\{1\}$ is not normal and the element $(-1,a)$ conjugates~$N$ to a different cyclic subgroup, for any~$a\in A$ not fixed by~$-1$.
\end{Rem}



\begin{thebibliography}{BCR97}

\bibitem[Bal07]{Balmer07}
Paul Balmer.
\newblock Supports and filtrations in algebraic geometry and modular
  representation theory.
\newblock {\em Amer. J. Math.}, 129(5):1227--1250, 2007.

\bibitem[Bal10]{balmer:sss}
Paul Balmer.
\newblock Spectra, spectra, spectra - tensor triangular spectra versus
  {Z}ariski spectra of endomorphism rings.
\newblock {\em Algebraic and Geometric Topology}, 10(3):1521--63, 2010.

\bibitem[Bal12]{Balmer12}
Paul Balmer.
\newblock Descent in triangulated categories.
\newblock {\em Math. Ann.}, 353(1):109--125, 2012.

\bibitem[Bal16]{Balmer16b}
Paul Balmer.
\newblock Separable extensions in tensor-triangular geometry and generalized
  {Q}uillen stratification.
\newblock {\em Ann. Sci. \'Ec. Norm. Sup\'er. (4)}, 49(4):907--925, 2016.

\bibitem[Bal18]{Balmer18a}
Paul Balmer.
\newblock On the surjectivity of the map of spectra associated to a
  tensor-triangulated functor.
\newblock {\em Bull. Lond. Math. Soc.}, 50(3):487--495, 2018.

\bibitem[BCR97]{BensonCarlsonRickard97}
David~J. Benson, Jon~F. Carlson, and Jeremy Rickard.
\newblock Thick subcategories of the stable module category.
\newblock {\em Fund. Math.}, 153(1):59--80, 1997.

\bibitem[BD20]{BalmerDellAmbrogio20}
Paul Balmer and Ivo Dell'Ambrogio.
\newblock {\em Mackey 2-functors and {M}ackey 2-motives}.
\newblock EMS Monographs in Mathematics. European Mathematical Society (EMS),
  Z\"{u}rich, 2020.

\bibitem[BD24]{BalmerDellAmbrogio24}
Paul Balmer and Ivo Dell'Ambrogio.
\newblock Cohomological {M}ackey 2-functors.
\newblock {\em J. Inst. Math. Jussieu}, 23(1):279--309, 2024.

\bibitem[BF11]{balmer-favi:idempotents}
Paul Balmer and Giordano Favi.
\newblock Generalized tensor idempotents and the telescope conjecture.
\newblock {\em Proc. Lond. Math. Soc. (3)}, 102(6):1161--1185, 2011.

\bibitem[BG23a]{BalmerGallauer23a}
Paul Balmer and Martin Gallauer.
\newblock Finite permutation resolutions.
\newblock {\em Duke Math. J.}, 172(2):201--229, 2023.

\bibitem[BG23b]{BalmerGallauer23b}
Paul Balmer and Martin Gallauer.
\newblock Permutation modules, {M}ackey functors, and {A}rtin motives.
\newblock In {\em Representations of algebras and related structures}, EMS Ser.
  Congr. Rep., pages 37--75. EMS Press, Berlin, 2023.

\bibitem[BG25a]{tt-perm}
Paul Balmer and Martin Gallauer.
\newblock The geometry of permutation modules.
\newblock {\em Invent. Math.}, 241(3):841--928, 2025.

\bibitem[BG25b]{BalmerGallauer25a}
Paul Balmer and Martin Gallauer.
\newblock The spectrum of {A}rtin motives.
\newblock {\em Trans. Amer. Math. Soc.}, 378(3):1733--1754, 2025.

\bibitem[Bro82]{brown:cohomology-groups}
Kenneth~S. Brown.
\newblock {\em Cohomology of groups}, volume~87 of {\em Graduate Texts in
  Mathematics}.
\newblock Springer-Verlag, New York-Berlin, 1982.

\bibitem[Del22]{DellAmbrogio22}
Ivo Dell'Ambrogio.
\newblock Green 2-functors.
\newblock {\em Trans. Amer. Math. Soc.}, 375(11):7783--7829, 2022.

\bibitem[Fuh25]{fuhrmann2025modularfixedpointsequivariant}
Yorick Fuhrmann.
\newblock Modular fixed points in equivariant homotopy theory, 2025.

\bibitem[Gal25]{gallauer:periods}
Martin Gallauer.
\newblock Periods in equivariant and motivic contexts, 2025.
\newblock Preprint available at \url{https://arxiv.org/abs/2511.14325}.

\bibitem[Hap88]{Happel88}
Dieter Happel.
\newblock {\em Triangulated categories in the representation theory of
  finite-dimensional algebras}, volume 119 of {\em LMS Lecture Note}.
\newblock Cambr.\ Univ.\ Press, Cambridge, 1988.

\bibitem[Kra05]{Krause05b}
Henning Krause.
\newblock The stable derived category of a {N}oetherian scheme.
\newblock {\em Compos. Math.}, 141(5):1128--1162, 2005.

\bibitem[Nee92]{Neeman92b}
Amnon Neeman.
\newblock The connection between the {$K$}-theory localization theorem of
  {T}homason, {T}robaugh and {Y}ao and the smashing subcategories of
  {B}ousfield and {R}avenel.
\newblock {\em Ann. Sci. \'Ecole Norm. Sup. (4)}, 25(5):547--566, 1992.

\bibitem[Ric89]{Rickard89}
Jeremy Rickard.
\newblock Derived categories and stable equivalence.
\newblock {\em J. Pure Appl. Algebra}, 61(3):303--317, 1989.

\end{thebibliography}


\end{document}